\newtheorem{theorem}{Theorem}
\newtheorem{lemma}{Lemma}
\newcommand{\innerproduct}[2]{\langle #1, #2 \rangle}
\DeclareMathOperator{\spn}{span}
\title{Time-frequency localization in the Fourier Symmetric Sobolev space}
\author{Denis Zelent}
\address{Department of Mathematical Sciences, Norwegian University of Science and Technology (NTNU), 7491 Trondheim, Norway} 
\email{denis.zelent@ntnu.no}
\keywords{Reproducing kernel Hilbert spaces, Time–frequency analysis, Localization operators, Hermite functions, Bargmann transform}
\thanks{The author was supported by Grant 334466 of the 
Research Council of Norway.}
\begin{document}

\begin{abstract}
    We study concentration operators acting on the Fourier symmetric Sobolev space~$\mathcal{H}$ consisting of functions $f$ such that
    $\int_{\mathbb{R}} |f(x)|^2(1+x^2) dx +  \int_{\mathbb{R}} |\hat{f}(\xi)|^2(1+\xi^2) d\xi < \infty  $.
    We find that the Bargmann transform is a unitary operator from $\mathcal{H}$ to a weighted Fock space. After identifying the reproducing kernel of $\mathcal{H}$, we discover an unexpected phenomenon about the decay of the eigenvalues of a two-sided concentration operator, namely that the plunge region is of the same order of magnitude as the region where the eigenvalues are close to 1, contrasting the classical case of Paley--Wiener spaces. 
\end{abstract}
\maketitle

\section{Introduction}
The Fourier symmetric Sobolev space $\mathcal{H}$ is defined to be 
$$\mathcal{H} = \{f: f,\hat{f} \in \mathcal{H}_1, \| f \|^2_{\mathcal{H}} = \| f \|^2_{\mathcal{H}_1} + \| \hat{f} \|^2_{\mathcal{H}_1} \},$$ 
where $\mathcal{H}_1$ is the Sobolev space of functions $f$ on $\mathbb{R}$ such that
    $$\| f \|^2_{\mathcal{H}_1} = \int_{\mathbb{R}} |\hat{f}(\xi)|^2(1+\xi^2) d\xi < \infty,
    $$
with $\hat{f}(t) = \mathcal{F}f(t)=\int_{\mathbb{R}}f(x)e^{-2\pi itx}dx.$
Since $\mathcal{H}$ treats time and frequency in a symmetric way and is invariant under the Fourier transform, it may reflect interesting aspects of the uncertainty principle. For example, the Balian--Low theorem states that there is no function $g\in \mathcal{H}$ such that the Gabor system $\{e^{2\pi i mbx}g(x-na)\}_{m,n \in \mathbb{Z}}$ with $ab=1$ forms an orthonormal basis for $L^2(\mathbb{R})$ (see e.g. \cite[§8.4]{gr01}). 
In addition, $\mathcal{H}$ provides a natural setting for an analytic study of Fourier interpolation, as shown in the recent paper \cite[Thm.~1.3 and the following discussion]{kulikov2023fourier}.
It also appears in the study of pseudodifferential operators as one of the Shubin--Sobolev spaces (also called Shubin classes), which turns out to be one of the modulation spaces on $\mathbb{R}$, namely
\begin{align*}
    M_{v_1}^{2}(\mathbb{R}) = \left\{f\in \mathcal{S}'(\mathbb{R}) : ||f||_{M_{v_1}^{2}} = \left(\int_{\mathbb{R}}\int_{\mathbb{R}} |V_gf(x,\omega)|^2 (1+x^2+\omega^2)dxd\omega\right)^{1/2} < \infty \right\},
\end{align*}
where $g(t) = 2^{1/4}e^{-\pi t^2}$ is the normalized gaussian and 
\begin{align*}
    V_gf(x,\omega) = \int_{\mathbb{R}}f(t)\overline{g(t-x)}e^{-2\pi i \omega t}dt
\end{align*}
is the definition of the short-time Fourier transform, see e.g. \cite[§2.2]{boggiatto2004},\cite{boto05}, \cite{Luef11} and \cite[§25.3]{shubin}.

It is known from \cite{kulikov2023fourier} that $\mathcal{H}$ is a reproducing kernel Hilbert space, but the kernel has not been identified. 
We will find an explicit formula for this kernel based on the discovery that the scaled Hermite functions form an orthonormal basis of $\mathcal{H}$. Another proof of this, using that the Hermite functions are eigenfunctions of the Hamiltonian operator of the classical quantum harmonic oscillator, appeared in \cite{szehr25} after the announcement of the first version of the present paper. We obtain this result by showing that the Bargmann transform is a unitary operator from $\mathcal{H}$ to the weighted Fock space with
$$||F||^2 = \int_\mathbb{C}|F(z)|^2e^{-2|z|^2}\left(2\pi-\frac{1}{2}+2|z|^{2}\right)dA <\infty.$$

We will use the reproducing kernel $K_t$ of $\mathcal{H}$ to study the problem of maximizing the ratio
\begin{align}\label{conce}
    \frac{\int_{I} |f(x)|^2(1+x^2) dx+\int_{J} |\hat{f}(\xi)|^2(1+\xi^2) d\xi}{\int_{\mathbb{R}} |f(x)|^2 (1+x^2)dx+\int_{\mathbb{R}} |\hat{f}(\xi)|^2(1+\xi^2) d\xi},
\end{align}
where $I,J\subset \mathbb{R}$ are measurable.
The inspiration for our study is the classical Paley--Wiener case (see e.g. \cite{KULIKOV2024101639,Landau,LANDAU1980469,Slepian}), where one seeks to maximize 
\begin{align*}
    \frac{\int_{I} |f(x)|^2 dx}{\int_{\mathbb{R}} |f(x)|^2 dx}
\end{align*}
for $f$ in the Paley--Wiener space. By standard arguments, we are in this case led to consider the eigenvalues of the Hilbert--Schmidt operator $S_I$ defined by $S_If(t) = \int_I f(x)\overline{k_t(x)}dx$, where $k_t(x)$ is the reproducing kernel of the Paley--Wiener space. If $\lambda_n(I)$ denotes the eigenvalues of $S_I$ in decreasing order, one obtains $\sum_n \lambda_n(I) \sim \sum_n \lambda_n^2(I)$ as $|I|\to\infty$, where as usual $f(x)\sim g(x)$ means $\lim_{x\to\infty} \frac{f(x)}{g(x)}=1.$
This implies that the eigenvalues are at first very close to $1$, then there is a short plunge region where they decay, and afterwards they are very close to $0$. This behavior allows us, for example, to establish critical densities for sampling and interpolation, as was done in ~\cite{Landau}. 

In our case, the problem of maximizing (\ref{conce}) leads to the consideration of the eigenvalues of the operator $T_{I,J}:\mathcal{H} \to \mathcal{H}$ defined as
$$T_{I,J}f(t) = \int_{I} f(x)\overline{K_t(x)}(1+x^2)dx + \int_{J} \hat{f}(x)\overline{\hat{K_t}(x)}(1+x^2)dx.$$
Interestingly, we discover that in this case, the plunge region is much longer. Indeed, if $\lambda_n$ denotes the $n$th eigenvalue of $T_{I,I}$ for $I=[-R,R]$, then $\lambda_n\in(0,1)$ and
\begin{align*}
    \sum_n \lambda_n &\sim 2\pi R^2, \quad \sum_n \lambda_n^2 \sim  (2\pi-2)R^2.
\end{align*}
We have not found a complete description of the asymptotic behavior of these eigenvalues, but the following theorem shows that many of them will cluster close to $1$.
\begin{theorem} \label{4R^2th}
    Fix $\epsilon>0$ and let $I=[-R,R], J=[-T,T]$. Then there are
    \begin{itemize}
        \item at least $4RT - O(\log R)-O(T)$ eigenvalues of $T_{I,J}$ larger than $1-\epsilon$,
        \item at most $2\pi(R^2+T^2) + o(R^2)+o(T^2)$ eigenvalues of $T_{I,J}$ larger than $\epsilon$,
    \end{itemize}
    where $R,T\to \infty$.
\end{theorem}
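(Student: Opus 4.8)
The plan is to exploit the decomposition $T_{I,J} = P_I + Q_J$, where $P_I f(t) = \int_I f(x)\overline{K_t(x)}(1+x^2)\,dx$ and $Q_J f(t) = \int_J \hat{f}(x)\overline{\hat{K_t}(x)}(1+x^2)\,dx$ are positive operators satisfying, by the reproducing property, $\langle P_I f, f\rangle_{\mathcal{H}} = \int_I |f(x)|^2(1+x^2)\,dx$ and $\langle Q_J f, f\rangle_{\mathcal{H}} = \int_J |\hat{f}(\xi)|^2(1+\xi^2)\,d\xi$. Consequently $\langle T_{I,J} f, f\rangle_{\mathcal{H}}$ equals the numerator of (\ref{conce}), so $0 \le T_{I,J} \le \mathrm{Id}$, every eigenvalue lies in $[0,1]$, and (since the trace is finite, of order $R^2+T^2$) the spectrum is discrete. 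Moreover the deficiency $\|f\|_{\mathcal{H}}^2 - \langle T_{I,J} f, f\rangle_{\mathcal{H}}$ is exactly the leakage $\int_{\mathbb{R}\setminus I}|f|^2(1+x^2)\,dx + \int_{\mathbb{R}\setminus J}|\hat{f}|^2(1+\xi^2)\,d\xi$. Throughout I would use the min--max principle $\lambda_N = \max_{\dim V = N}\min_{0\neq f\in V}\langle T_{I,J}f,f\rangle_{\mathcal{H}}/\|f\|_{\mathcal{H}}^2$ to convert eigenvalue counts into the construction of suitable test subspaces.

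For the lower bound I would build an explicit subspace $V \subset \mathcal{H}$ of dimension $4RT - O(\log R) - O(T)$ on which the leakage is at most $\epsilon\|f\|_{\mathcal{H}}^2$, so that $\lambda_{\dim V}>1-\epsilon$. The natural candidates are functions jointly localized to the phase-space box $[-R,R]\times[-T,T]$: a near-critically sampled Gabor-type system from a fixed well-localized window, or equivalently smooth time-limited bumps modulated to a frequency grid of spacing $\approx 1/(2R)$ inside $J$. Since each atom is concentrated in both time and frequency within the box, its leakage is small, and a lattice of density one in a box of area $4RT$ has $\approx 4RT$ points. I expect the two loss terms to arise as follows: discarding the atoms lying within a fixed margin of the boundary of the box (where the weight $(1+x^2)$ and the leakage are largest) costs $O(T)$ atoms, while the obstruction to attaining exactly critical density with a well-conditioned (Riesz) system in $\mathcal{H}$ — a quantitative manifestation of the Balian--Low phenomenon recalled in the introduction — plausibly accounts for the $O(\log R)$ deficit. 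The work here is to verify that the chosen atoms form a Riesz sequence in $\mathcal{H}$ with uniform bounds and to estimate their weighted leakage.

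For the upper bound I would first reduce to the one-sided operators via Weyl's inequality for eigenvalues of sums: if $\lambda_m(P_I)\le\epsilon/2$ and $\lambda_n(Q_J)\le\epsilon/2$, then $\lambda_{m+n-1}(T_{I,J})\le\epsilon$, whence $\#\{\lambda_k(T_{I,J})>\epsilon\} \le \#\{\lambda_k(P_I)>\epsilon/2\} + \#\{\lambda_k(Q_J)>\epsilon/2\}$. By the Fourier symmetry of $\mathcal{H}$ it then suffices to show that $P_I$, for $I=[-R,R]$, has at most $2\pi R^2 + o(R^2)$ eigenvalues above any fixed threshold $\delta$. Here I would pass to the scaled Hermite basis (equivalently, the monomial basis of the weighted Fock space via the Bargmann transform): taking the test space in the min--max to be the span of the first $N$ scaled Hermite functions, the orthogonal complement consists of high-energy functions whose weighted time-mass in the fixed strip $[-R,R]$ is controlled by the phase-space localization of Hermite functions. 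The key point is that the effective rank of $P_I$ is governed by the number of Hermite modes whose phase-space support meets the strip with non-negligible weight, which the Bargmann normalization should turn into the count $2\pi R^2$; this is consistent with the stated trace asymptotics, since an eigenvalue profile $\#\{\lambda_k(P_I)>\delta\}\approx 2\pi R^2(1-\delta)$ integrates over $\delta\in(0,1)$ to $\mathrm{tr}(P_I)\sim\pi R^2$.

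The main obstacle is the upper bound, and specifically obtaining the correct \emph{$\epsilon$-independent} constant $2\pi$. Naive estimates — bounding $(1+x^2)\le 1+R^2$ on $I$ together with $\|f\|_{L^2}^2 \le (2+\tfrac{2N+1}{2\pi})^{-1}\|f\|_{\mathcal{H}}^2$ for $f$ spanned by Hermite modes of index $\ge N$, or simply $\#\{\lambda_k>\epsilon\}\le\mathrm{tr}(T_{I,J})/\epsilon$ — only yield the $\epsilon$-dependent bound $\sim\pi(R^2+T^2)/\epsilon$, which is far from the claim. Because $P_I$ is genuinely far from a projection (its defect $\mathrm{tr}(P_I-P_I^2)$ is of the same order $R^2$ as its trace, reflecting exactly the anomalously long plunge region that motivates the paper), the count above a fixed threshold cannot be recovered from one or two moments and must instead come from a sharp Weyl-type spectral asymptotic for $P_I$. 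Establishing that asymptotic with the right leading constant — presumably through a careful WKB or phase-space analysis of the weighted Hermite mass in a strip, or through a localization estimate for the Bargmann transform of functions supported on $I$ — is where the real effort lies; the lower bound, by contrast, should reduce to a comparatively routine, if delicate, Riesz-sequence-and-leakage construction.
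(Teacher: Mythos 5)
Your overall framework (positivity $0\le T_{I,J}\le \mathrm{Id}$, min--max, and the reduction of the upper bound to the one-sided operators via Weyl's inequality for sums --- which is exactly the paper's subspace argument with $S=\spn\{g_n,\widetilde{g_m}:\lambda_n,\widetilde{\lambda_m}>\epsilon/2\}$) is sound, but both halves of your plan have a genuine gap where the actual work lies. For the upper bound, you correctly observe that one or two moments of $T_I$ cannot produce the $\epsilon$-independent constant $2\pi$, but you then conclude that a moment method cannot work at all and that a sharp Weyl-type phase-space asymptotic is required. That is the missing idea: the paper computes the \emph{third} moment, $\sum_n\lambda_n^3\sim\frac{\pi}{4}R^2$, which together with $\sum_n\lambda_n\sim\pi R^2$ and $\sum_n\lambda_n^2\sim\frac{\pi}{2}R^2$ gives $\sum_n\lambda_n(\lambda_n-\tfrac12)^2=o(R^2)$. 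Since the eigenvalues lie in $[0,1]$, this forces all but $o(R^2)$ of the eigenvalues exceeding $\epsilon$ to cluster at $\tfrac12$, and the first moment then counts them: at most $2\pi R^2+o(R^2)$ exceed $\epsilon$ (Theorem \ref{onesideth}). No Weyl asymptotic or WKB analysis is needed, and without the third-moment computation your upper-bound plan is not a proof --- it merely names a (much harder) problem whose solution is not supplied.

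For the lower bound, your Gabor-atom construction is a different route from the paper's and is only heuristic at the crucial point: you must produce a subspace of dimension at least $4RT-O(\log R)-O(T)$, and an error term of $o(RT)$ is \emph{not} sufficient for the statement as claimed. Attributing the $O(\log R)$ loss to a ``quantitative Balian--Low phenomenon'' and the $O(T)$ loss to boundary atoms is speculation, not an argument; verifying uniform Riesz bounds and weighted leakage for a near-critical Gabor system in $\mathcal{H}$ is nontrivial precisely because of Balian--Low. The paper sidesteps all of this by taking the subspace inside the weighted Paley--Wiener space $PW_T(1+x^2)\subset\mathcal{H}$: for such $f$ the frequency concentration on $[-T,T]$ is \emph{exact}, so only the time concentration must be controlled, and that reduces to counting eigenvalues of the one-sided operator $W_R$ via its first two moments ($\sum\lambda_n=4RT+O(1)$, $\sum\lambda_n^2\ge 4RT-O(\log R)-O(T)$, hence $\sum\lambda_n(1-\lambda_n)=O(\log R)+O(T)$), which is where the specific error terms actually come from. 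If you want to salvage your approach, you would need to replace the heuristics by quantitative leakage estimates that genuinely achieve the $O(\log R)+O(T)$ error; as written, the paper's Paley--Wiener reduction is both simpler and the only route that visibly delivers the stated bound.
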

We conjecture that the number of these eigenvalues larger than $1-\epsilon$ is also at most $4RT+o(RT)$. Using the $4RT - O(\log R)-O(T)$ eigenvalues of $T_{I,J}$ which are close to $1$, we may be able to improve the second part of the theorem to at most $2\pi(R^2+T^2) - 4RT + o(R^2)+o(T^2)$ eigenvalues of $T_{I,J}$ larger than $\epsilon$. However, the proof would need to be more complicated, and there is no reason to believe that the new bound would be optimal.

To prove Theorem \ref{4R^2th}, we consider the analogous problem of concentration in the Paley--Wiener space $PW_{T}(1+x^2)$ of functions $f$ with Fourier transform supported in $[-T, T]$ and with the norm
$$||f||^2 = \int_{\mathbb{R}} |f(x)|^2 (1+x^2)dx<\infty.$$
Here we are interested in maximizing the ratio 
\begin{align*}
    \frac{\int_{-R}^R |f(x)|^2 (1+x^2)dx}{\int_{\mathbb{R}} |f(x)|^2 (1+x^2)dx},
\end{align*}
which leads to considering the eigenvalues of the operator $W_R:PW_{T}(1+x^2)\to PW_{T}(1+x^2)$ defined as    
\begin{align*}
    W_Rf(t) = \int_{-R}^R f(x)\overline{L_t(x)}(1+x^2)dx,
\end{align*}
where $L_t$ is the reproducing kernel of $PW_{T}(1+x^2)$. For this operator, we obtain
\begin{theorem}\label{PWcon}
    For any $\epsilon>0$ there are $4RT + O(\log R)+O(T)$ eigenvalues of $W_R$ larger than $1-\epsilon$ as $R,T\to\infty$.
\end{theorem}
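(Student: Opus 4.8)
The plan is to reduce the eigenvalue count to two trace computations. Writing $\lambda_n\in[0,1]$ for the eigenvalues of the positive, trace-class operator $W_R$ and $N(1-\epsilon)=\#\{n:\lambda_n>1-\epsilon\}$, I would first record the elementary pointwise bounds
\[
\lambda-\tfrac1\epsilon\,\lambda(1-\lambda)\ \le\ \mathbf 1_{\lambda>1-\epsilon}\ \le\ \lambda+\tfrac1{1-\epsilon}\,\lambda(1-\lambda),\qquad \lambda\in[0,1],
\]
valid for every $0<\epsilon<1$. Applying these to the spectrum gives
\[
\operatorname{tr}(W_R)-\tfrac1\epsilon\,\Delta\ \le\ N(1-\epsilon)\ \le\ \operatorname{tr}(W_R)+\tfrac1{1-\epsilon}\,\Delta,\qquad \Delta:=\operatorname{tr}(W_R)-\operatorname{tr}(W_R^2)\ge 0 .
\]
Hence $N(1-\epsilon)=\operatorname{tr}(W_R)+O_\epsilon(\Delta)$, and it suffices to prove $\operatorname{tr}(W_R)=4RT+O(1)$ and $\Delta=O(\log R)+O(T)$.

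For the trace, the reproducing-kernel form of $W_R$ yields $\operatorname{tr}(W_R)=\int_{-R}^{R}L(x,x)(1+x^2)\,dx$, where $L(x,y)=L_y(x)$ is the reproducing kernel of $PW_T(1+x^2)$. I would compute $L$ explicitly by passing to the Fourier side, where the space becomes $H^1_0([-T,T])$ with norm $\int_{-T}^{T}\bigl(|\phi|^2+\tfrac1{4\pi^2}|\phi'|^2\bigr)\,d\xi$; the evaluation functional at $y$ is then represented by the solution of the two-point boundary value problem $(1-\tfrac1{4\pi^2}\partial_\xi^2)\psi_y=e^{-2\pi iy\xi}$ on $(-T,T)$, $\psi_y(\pm T)=0$. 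Solving this and transforming back gives $L(x,y)=\frac{1}{1+y^2}\frac{\sin(2\pi(x-y)T)}{\pi(x-y)}$ plus explicit boundary corrections built from $\cosh(2\pi\xi)$ and $\sinh(2\pi\xi)$. On the diagonal this produces $L(x,x)(1+x^2)=2T-\frac{1}{\pi(1+x^2)}\big(\tanh(2\pi T)\cos^2(2\pi xT)+\coth(2\pi T)\sin^2(2\pi xT)\big)$, and since the correction is bounded by $\coth(2\pi T)(1+x^2)^{-1}$ while $\int_{\mathbb R}(1+x^2)^{-1}\,dx=\pi$, integration over $[-R,R]$ gives $\operatorname{tr}(W_R)=4RT+O(1)$.

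For $\Delta$ I would combine the reproducing identity $\int_{\mathbb R}|L(x,y)|^2(1+y^2)\,dy=\|L_x\|^2=L(x,x)$ with the Hilbert–Schmidt formula $\operatorname{tr}(W_R^2)=\int_{-R}^{R}\int_{-R}^{R}|L(x,y)|^2(1+x^2)(1+y^2)\,dx\,dy$ to rewrite the plunge mass as the pure leakage integral
\[
\Delta=\int_{|x|\le R}\int_{|y|>R}|L(x,y)|^2(1+x^2)(1+y^2)\,dx\,dy .
\]
The dominant contribution comes from the sinc part of $L$: near each of the two boundary points $\pm R$ the factor $(1+x^2)/(1+y^2)$ is essentially $1$, so after the substitution $x=R-u$, $y=R+v$ the integral reduces to $\tfrac1{\pi^2}\int\sin^2(2\pi(u+v)T)/(u+v)^2$, whose logarithmic divergence is cut off below at scale $1/T$ and above at scale $R$, giving the classical Landau–Widom contribution $O(\log(RT))$. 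The far off-diagonal tails, where the decay of $L$ beats the weight, and the band-edge corrections, whose magnitude is $\lesssim (1+y^2)^{-1}(1+|x|)^{-1}$, contribute only $O(1)$ after using $\int_{|y|>R}(1+y^2)^{-1}\,dy=O(1/R)$. Collecting these estimates gives $\Delta=O(\log R)+O(T)$, and the theorem follows from the two-sided bound above.

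The main obstacle is the leakage estimate for $\Delta$: one must show that the weights $(1+x^2)(1+y^2)$, which a priori could amplify mass far from the diagonal, are everywhere dominated by the decay of $L(x,y)$, and that the oscillatory corrections at the band edges $\pm T$ do not spoil the leading term $4RT$. Controlling the cross terms between the sinc part and these corrections near $x,y\approx\pm R$ is the most delicate point, and it is there that one settles for the bound $O(T)$ in place of the sharper $O(\log T)$ suggested by the classical theory.
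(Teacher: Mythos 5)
Your proposal is correct and follows essentially the same route as the paper: you derive the reproducing kernel of $PW_T(1+x^2)$ from the same Fourier-side two-point boundary value problem, compute $\operatorname{tr}(W_R)=4RT+O(1)$, bound $\operatorname{tr}(W_R)-\operatorname{tr}(W_R^2)$ by $O(\log R)+O(T)$, and count eigenvalues via the standard $\sum_n\lambda_n(1-\lambda_n)$ argument that the paper only sketches. The sole cosmetic difference is that you recast the second-moment deficit as a leakage integral over $[-R,R]\times\{y:|y|>R\}$, whereas the paper directly lower-bounds the double integral over $[-R,R]^2$; the content is identical.
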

Finally, we consider a one-sided concentration problem in $\mathcal{H}$, namely that of maximizing the ratio
\begin{align}\label{conce1}
    \frac{\int_{I} |f(x)|^2(1+x^2) dx}{\int_{\mathbb{R}} |f(x)|^2(1+x^2) dx+\int_{\mathbb{R}} |\hat{f}(\xi)|^2(1+\xi^2) d\xi},
\end{align}
where $I\subset \mathbb{R}$ measurable.
In this case, for the eigenvalues of the corresponding operator $T_I:~\mathcal{H} \to \mathcal{H}$ defined as
$$T_If(t) = \int_{I} f(x)\overline{K_t(x)}(1+x^2)dx$$
we get
\begin{theorem} \label{onesideth}
    Fix $0<\epsilon<\frac{1}{4}$ and let $I=[-R,R]$. Then there are 
    \begin{itemize}
        \item at most $o(R^2)$ eigenvalues of $T_I$ larger than $0.5+\epsilon$,
        \item $2\pi R^2 + o(R^2)$ eigenvalues of $T_I$ between $0.5-\epsilon$ and $0.5+\epsilon$,
        \item at most $o(R^2)$ eigenvalues of $T_I$ between $\epsilon$ and $0.5-\epsilon$,
    \end{itemize}
    where $R\to\infty.$
\end{theorem}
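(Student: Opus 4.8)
The starting point is that $T_I$ is the positive operator on $\mathcal{H}$ attached to the bounded quadratic form $f\mapsto\int_I|f(x)|^2(1+x^2)\,dx$: pairing the defining formula against the reproducing kernel gives $\langle T_I f,f\rangle_{\mathcal{H}}=\int_I|f(x)|^2(1+x^2)\,dx$, so the eigenvalues of $T_I$ are exactly the critical values of the ratio (\ref{conce1}) and $0\le T_I\le\mathrm{Id}$. The decisive structural fact is that $T_I$ is ``one half'' of the two--sided operator: writing $\tilde T_I=\mathcal{F}^*T_I\mathcal{F}$ for the frequency--side companion, the form identity $\langle \tilde T_I f,f\rangle_{\mathcal H}=\int_I|\hat f(x)|^2(1+x^2)\,dx$ together with the invariance of the $\mathcal{H}$--norm under $\mathcal{F}$ shows that $\mathcal{F}$ is unitary on $\mathcal{H}$, that $T_{I,I}=T_I+\tilde T_I$, and that $T_I$ and $\tilde T_I$ are unitarily equivalent, hence isospectral. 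I would therefore try to transport the information about $T_{I,I}$ from Theorems \ref{4R^2th} and \ref{PWcon} to $T_I$ through the splitting
\begin{equation*}
    T_I=\tfrac12 T_{I,I}+D,\qquad D:=\tfrac12\bigl(T_I-\tilde T_I\bigr),
\end{equation*}
where $D=D^*$ and $\mathrm{tr}(D)=0$ by isospectrality, and where $D$ measures the imbalance between the time and frequency Sobolev mass on $I$.

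The second step is to compute the first two spectral moments. Isospectrality gives $\mathrm{tr}(T_I)=\tfrac12\mathrm{tr}(T_{I,I})\sim\pi R^2$, which I would also check directly from $\mathrm{tr}(T_I)=\int_{-R}^{R}K_t(t)(1+t^2)\,dt$ by inserting the diagonal of the kernel, $K_t(t)=\sum_n|h_n(t)|^2/\|h_n\|_{\mathcal{H}}^2$ with $\|h_n\|_{\mathcal{H}}^2=2+\tfrac{2n+1}{2\pi}$, and using the Plancherel--Rotach asymptotics of the Hermite functions. For the second moment I would use $\mathrm{tr}(T_I^2)=\iint_{I\times I}|K(x,y)|^2(1+x^2)(1+y^2)\,dx\,dy$, which I expect to yield $\mathrm{tr}(T_I^2)\sim\tfrac{\pi}{2}R^2$; combined with the announced $\mathrm{tr}(T_{I,I}^2)\sim(2\pi-2)R^2$ this determines $\mathrm{tr}(D^2)=\tfrac12\bigl(\mathrm{tr}(T_I^2)-\mathrm{tr}(T_I\tilde T_I)\bigr)$. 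These identities already force $\sum_n\lambda_n(1-\lambda_n)=\mathrm{tr}(T_I)-\mathrm{tr}(T_I^2)\sim\tfrac{\pi}{2}R^2$, which is precisely the value produced by a cluster of $2\pi R^2$ eigenvalues at $\tfrac12$; this is consistent with the claimed distribution and, once the two outer bullets are in place, supplies the count $2\pi R^2+o(R^2)$ in the middle bullet via the trace.

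The real work, and what I expect to be the main obstacle, is the two one--sided bounds: at most $o(R^2)$ eigenvalues above $\tfrac12+\epsilon$ and at most $o(R^2)$ in $(\epsilon,\tfrac12-\epsilon)$. The matched traces alone are far from enough, since a population split between a value near $1$ and a value below $\tfrac12$ reproduces the same first two moments; one must therefore genuinely control $D$ on the spectral subspaces of $T_{I,I}$. On the $\sim 4R^2$--dimensional top subspace where $T_{I,I}\approx\mathrm{Id}$ one has $\tfrac12 T_{I,I}\approx\tfrac12\mathrm{Id}$, so an eigenvalue of $T_I$ can leave $[\tfrac12-\epsilon,\tfrac12+\epsilon]$ only through a spectral contribution of $D$ exceeding $\epsilon$, and the task is to show there are $o(R^2)$ of these; note, however, that this top subspace accounts for only $4R^2\ll 2\pi R^2$ of the central eigenvalues, so the remaining ones must be tracked through the long plunge region of $T_{I,I}$, which is what makes the analysis delicate. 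The difficulty is that $T_I$ and $\tilde T_I$ do not commute and $\|D\|$ is not small, so $D$ cannot be diagonalized alongside $T_{I,I}$. I would attempt this either by bounding the commutator $[T_I,\tilde T_I]$ in trace norm, aiming for an $o(R^2)$ estimate that licenses an approximate joint diagonalization, or, more promisingly, by passing to the weighted Fock space, where $T_I$ becomes a Toeplitz--type operator with an explicitly computable symbol and a Szeg\H{o}--type limit theorem can be invoked to read off the full limiting distribution of $\{\lambda_n\}$; in either route the technical heart is justifying the semiclassical asymptotics for a symbol that is unbounded in the frequency variable over a spatial domain growing with $R$.
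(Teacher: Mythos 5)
Your first two moments are correct and agree with the paper ($\mathrm{tr}(T_I)\sim\pi R^2$, $\mathrm{tr}(T_I^2)\sim\tfrac{\pi}{2}R^2$), and you correctly diagnose that these two traces alone cannot rule out a population split away from $\tfrac12$. But at exactly that point the proposal stops being a proof: the routes you offer to close the gap (trace-norm bounds on $[T_I,\tilde T_I]$ licensing an approximate joint diagonalization, or a Szeg\H{o}-type limit theorem for a Toeplitz operator on the weighted Fock space) are not carried out, and you yourself flag them as the unresolved ``technical heart.'' So the two outer bullets --- at most $o(R^2)$ eigenvalues above $\tfrac12+\epsilon$ and at most $o(R^2)$ in $(\epsilon,\tfrac12-\epsilon)$ --- are left unproved.

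The missing idea is much more elementary than what you propose: compute one more moment. The paper evaluates
$\mathrm{tr}(T_I^3)=\iiint_{I^3}K_{x_1}(x_3)K_{x_2}(x_1)K_{x_3}(x_2)(1+x_1^2)(1+x_2^2)(1+x_3^2)\,dx_1dx_2dx_3\sim\tfrac{\pi}{4}R^2$
using the same near-diagonal kernel asymptotics $K_x(y)\sim\tfrac{\pi}{x}e^{-\pi|x^2-y^2|}$ that give the first two moments. Then
\begin{equation*}
\sum_n\lambda_n\Bigl(\lambda_n-\tfrac12\Bigr)^2=\mathrm{tr}(T_I^3)-\mathrm{tr}(T_I^2)+\tfrac14\,\mathrm{tr}(T_I)=\tfrac{\pi}{4}R^2-\tfrac{\pi}{2}R^2+\tfrac{\pi}{4}R^2+o(R^2)=o(R^2),
\end{equation*}
and since every term is nonnegative and is bounded below by $\tfrac12\epsilon^2$ when $\lambda_n>\tfrac12+\epsilon$ and by $\epsilon^3$ when $\epsilon<\lambda_n<\tfrac12-\epsilon$, both outer bullets follow immediately; the middle count then comes from the first two moments by the contradiction argument you sketch (one does need the second moment there, not just the trace, to exclude a large mass of eigenvalues below $\epsilon$). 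No control of $D=\tfrac12(T_I-\tilde T_I)$ on spectral subspaces, no commutator estimate, and no Szeg\H{o} theorem is needed: the point is that the three moments $\pi R^2$, $\tfrac{\pi}{2}R^2$, $\tfrac{\pi}{4}R^2$ are exactly those of a point mass at $\tfrac12$ of weight $2\pi R^2$, and a cubic test polynomial vanishing to second order at $\tfrac12$ detects this.
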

It is interesting that most of the eigenvalues of $T_I$ larger than $\epsilon$ cluster around $\frac{1}{2}$, since, in general, $\int_{\mathbb{R}} |f(x)|^2(1+x^2) dx$ and $\int_{\mathbb{R}} |\hat{f}(\xi)|^2(1+\xi^2) d\xi$ may differ considerably for a ``generic'' $f\in \mathcal{H}$. Theorem \ref{onesideth} also shows that the plunge region for the eigenvalues of $T_I$ is negligible. 

The results of this paper exhibit a clear distinction from the well-studied Gabor--Toeplitz localization operator, which for a nonnegative, bounded function $a$ on $\mathbb{R}^2$ is defined as 
\begin{align*}
    T_af(t) = \iint\limits_{\mathbb{R}^2} a(x,\omega) V_gf(x,\omega) g(t-x) e^{2\pi it\omega}dxd\omega.
\end{align*}
See e.g. \cite{Fei02, Fei01, Fei14, Gro13} for the study of $T_a$.
When, in particular, $a$ is the indicator function of $[-R,R]\times [-T,T]$, Theorem 4.1 in \cite{Fei02} shows that for the Gabor--Toeplitz localization operator $T_a$, the size of the plunge region is small compared with the size of the region where the eigenvalues are close to 1. Note that the concentration operator $T_{[-R,R],[-T,T]}$ studied in this paper also measures the time-frequency concentration of $f$ on $[-R,R]\times [-T,T]$ in accordance with (\ref{conce}), but produces a different eigenvalue behaviour. 

Theorem \ref{onesideth} sheds some light on this difference. Consider, for example, $I=[-R,R]$, $J=[-\epsilon,\epsilon]$ for some fixed $R$. By decreasing the $\epsilon$, the eigenvalues of the Gabor--Toeplitz operator with $a$ being the indicator function of $I\times J$ will necessarily decrease to $0$. At the same time, 
\begin{align*}
    \frac{\int_{I} |f(x)|^2(1+x^2) dx+\int_{J} |\hat{f}(\xi)|^2(1+\xi^2) d\xi}{\int_{\mathbb{R}} |f(x)|^2 (1+x^2)dx+\int_{\mathbb{R}} |\hat{f}(\xi)|^2(1+\xi^2) d\xi} \geq \frac{\int_{I} |f(x)|^2(1+x^2) dx}{\int_{\mathbb{R}} |f(x)|^2 (1+x^2)dx+\int_{\mathbb{R}} |\hat{f}(\xi)|^2(1+\xi^2) d\xi}, 
\end{align*}
and by Theorem \ref{onesideth} there are many eigenvalues of $T_{I,J}$ around $0.5$.

The paper is organized as follows. In Section 2, we find the closed formula for the reproducing kernel of $\mathcal{H}$. In Section 3, we discuss the concentration problem (\ref{conce}). In Section 4, we turn our attention to the concentration operator in the weighted Paley--Wiener space $PW_{T}(1+x^2)$. In Section 5, we return to the space $\mathcal{H}$ and examine the concentration problem (\ref{conce1}). Finally, in Section 6, we prove Theorem \ref{4R^2th}.

\section{The reproducing kernel of $\mathcal{H}$}\label{rk}
In this section, we find the reproducing kernel of $\mathcal{H}$, which was done as part of the author's master's thesis \cite{master}. For the convenience of the reader, we include this computation here.

Since a reproducing kernel can be expressed using an orthonormal basis, we want to find a suitable orthonormal basis of $\mathcal{H}$. We will show that such a basis consists of scaled Hermite functions
$e^{-\pi x^2}H_n(\sqrt{2\pi}x),$ where $H_n$ denotes the $n$th Hermite polynomial.
We begin with the following lemma.
\begin{lemma}\label{ortho}
If $n>m+2$, $n,m\geq 0$, then
    $$\int_{\mathbb{R}}H_n(\sqrt{2\pi}x)H_m(\sqrt{2\pi}x)e^{-2\pi x^2}(1+x^2)dx = 0.$$
\end{lemma}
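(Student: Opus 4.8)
The plan is to reduce everything to the classical orthogonality of the Hermite polynomials. Writing $h_k(x) = H_k(\sqrt{2\pi}x)$, the substitution $y = \sqrt{2\pi}x$ in the standard relation $\int_{\mathbb{R}} H_n(y)H_m(y)e^{-y^2}\,dy = \sqrt{\pi}\,2^n n!\,\delta_{nm}$ shows that $\{h_k\}$ is an orthogonal family with respect to the Gaussian weight $e^{-2\pi x^2}$. I would first split the integral in question as the sum of the ``Gaussian part'' $\int_{\mathbb{R}} h_n h_m e^{-2\pi x^2}\,dx$ and the ``quadratic part'' $\int_{\mathbb{R}} x^2 h_n h_m e^{-2\pi x^2}\,dx$. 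Because the hypothesis $n > m+2$ forces $n \neq m$, the Gaussian part vanishes immediately by orthogonality.

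The substance is the quadratic part, and here I would use the three-term recurrence. Rescaling the identity $yH_k(y) = \tfrac12 H_{k+1}(y) + kH_{k-1}(y)$ gives $x h_k = \frac{1}{\sqrt{2\pi}}\bigl(\tfrac12 h_{k+1} + k h_{k-1}\bigr)$. Applying this twice, I would express $x^2 h_n$ as a linear combination of $h_{n+2}$, $h_n$, and $h_{n-2}$ only (a short computation gives $x^2 h_n = \frac{1}{2\pi}\bigl(\tfrac14 h_{n+2} + \tfrac{2n+1}{2} h_n + n(n-1) h_{n-2}\bigr)$, though the precise coefficients are irrelevant). The key structural fact is simply that multiplying $h_n$ by $x^2$ can only shift the index by $0$ or $\pm 2$.

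With this in hand, the quadratic part becomes $\int_{\mathbb{R}} (x^2 h_n) h_m e^{-2\pi x^2}\,dx$, a combination of the integrals $\int_{\mathbb{R}} h_k h_m e^{-2\pi x^2}\,dx$ for $k \in \{n-2, n, n+2\}$. By Hermite orthogonality each of these vanishes unless $m = k$. But $n > m+2$ means $m \leq n-3$, so $m$ is strictly smaller than each of $n-2$, $n$, $n+2$; hence none of the three inner products survives and the quadratic part is zero as well. Adding the two parts completes the argument.

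There is no genuine obstacle here; the only point requiring care is the bookkeeping in applying the recurrence twice and confirming that the indices produced are exactly $n-2$, $n$, $n+2$, so that the gap condition $n > m+2$ rules out every surviving term. By symmetry of the integrand in $n$ and $m$, the same reasoning covers the case $m > n+2$, so the conclusion is really that the integral vanishes whenever $|n-m| > 2$.
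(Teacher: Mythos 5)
Your argument is correct, but it takes a genuinely different route from the paper. The paper uses the Rodrigues formula $H_n(x) = (-1)^n e^{x^2}\frac{d^n}{dx^n}e^{-x^2}$ and $n$ integrations by parts, so that the integral becomes $\int_{\mathbb{R}} \frac{d^n}{dx^n}\bigl[H_m(x)\bigl(1+\frac{x^2}{2\pi}\bigr)\bigr]e^{-x^2}\,dx$, which vanishes because the bracketed polynomial has degree $m+2 < n$. You instead absorb the weight $1+x^2$ via the three-term recurrence $yH_k(y)=\tfrac12 H_{k+1}(y)+kH_{k-1}(y)$, applied twice, reducing everything to the classical orthogonality relation; your expansion $x^2 h_n = \frac{1}{2\pi}\bigl(\tfrac14 h_{n+2}+\tfrac{2n+1}{2}h_n+n(n-1)h_{n-2}\bigr)$ is correct, and the gap condition $n>m+2$ indeed rules out all three surviving indices. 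Each approach has its merits: the paper's one-line Rodrigues argument is the same device reused in its Lemmas \ref{norm} and \ref{bas}, so it keeps the toolkit uniform; your recurrence-based version makes the structural point (multiplication by $x^2$ shifts the Hermite index by at most $2$) explicit, yields the symmetric statement that the integral vanishes whenever $|n-m|>2$, and as a bonus the exact coefficients you computed would let you read off the diagonal norm of Lemma \ref{norm} with no further integration by parts.
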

\begin{proof}
    The lemma follows by using $H_n(x) = (-1)^n e^{x^2}\frac{d^n}{dx^n}e^{-x^2}$ and $n$ integrations by parts:
\begin{align*}
    \int_{\mathbb{R}}H_n(\sqrt{2\pi}x)H_m(\sqrt{2\pi}x)e^{-2\pi x^2}(1+x^2)dx
    &=\frac{1}{\sqrt{2\pi}}\int_{\mathbb{R}}(-1)^n \left(\frac{d^n}{dx^n}e^{-x^2}\right)H_m(x)\left(1+\frac{x^2}{2\pi}\right)dx \\
    &= \frac{1}{\sqrt{2\pi}} \int_{\mathbb{R}} \left(\frac{d^n}{dx^n}H_m(x)\left(1+\frac{x^2}{2\pi}\right)\right)e^{-x^2}dx = 0,
\end{align*}
     as $H_m(x)$ is a polynomial of degree $m$, and therefore $\frac{d^n}{dx^n}H_m(x)\left(1+\frac{x^2}{2\pi}\right) = 0$ for $n>m+2$.
\end{proof}

\begin{lemma}\label{BH_basis}
    The Hermite functions $H_n(\sqrt{2\pi}x) e^{-\pi x^2}$ are pairwise orthogonal in $\mathcal{H}$.
\end{lemma}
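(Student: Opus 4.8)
The plan is to write the $\mathcal{H}$-inner product explicitly as
$$\langle h_n, h_m\rangle_{\mathcal{H}} = \int_{\mathbb{R}} h_n(x)\overline{h_m(x)}(1+x^2)\,dx + \int_{\mathbb{R}} \hat{h}_n(\xi)\overline{\hat{h}_m(\xi)}(1+\xi^2)\,d\xi,$$
where $h_n(x) = H_n(\sqrt{2\pi}x)e^{-\pi x^2}$, and then to exploit two structural facts about these functions. First, each $h_n$ is real-valued and has definite parity $(-1)^n$. Second, and crucially, with the normalization $\hat f(\xi)=\int f(x)e^{-2\pi i x\xi}dx$ the scaled Hermite functions are eigenfunctions of the Fourier transform, $\hat{h}_n = (-i)^n h_n$. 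Substituting this into the frequency integral turns it into a multiple of the time integral, so that, writing $A_{nm} = \int_{\mathbb{R}} h_n(x) h_m(x)(1+x^2)\,dx \in \mathbb{R}$, I obtain $\langle h_n, h_m\rangle_{\mathcal{H}} = \bigl(1 + (-i)^n i^m\bigr)A_{nm}$.

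From here I would split into cases according to $|n-m|$. When $|n-m|\geq 3$ the factor $A_{nm}$ vanishes directly by Lemma~\ref{ortho} (together with the symmetry $A_{nm}=A_{mn}$ of the integrand), so the inner product is $0$. When $|n-m|$ is odd, in particular when $|n-m|=1$, the product $h_n h_m$ is an odd function while $1+x^2$ is even, so again $A_{nm}=0$ by parity. The only remaining off-diagonal case is $|n-m|=2$, and here $A_{nm}$ is generally \emph{nonzero}: the weight $x^2$ couples Hermite functions whose indices differ by $2$. In this case I would instead compute the scalar factor: a short calculation gives $(-i)^n i^m = (-1)^n i^{\,n+m} = -1$ when $|n-m|=2$, so $1+(-i)^n i^m = 0$ and the inner product vanishes regardless of $A_{nm}$. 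Finally, for $n=m$ one has $(-i)^n i^n = 1$, giving $\langle h_n,h_n\rangle_{\mathcal{H}} = 2A_{nn}>0$, which confirms these are genuinely nonzero vectors of $\mathcal{H}$ (they are Schwartz functions, so all integrals converge and $h_n\in\mathcal{H}$).

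The main obstacle, and the conceptual heart of the lemma, is precisely the case $|n-m|=2$. There the time-domain overlap does not vanish, so orthogonality in the plain Sobolev space $\mathcal{H}_1$ would fail; it is rescued only by the Fourier-symmetric structure of $\mathcal{H}$, where the frequency-domain contribution carries the phase $(-i)^n\overline{(-i)^m}=-1$ and exactly cancels the time-domain contribution. Verifying that $\hat h_n=(-i)^n h_n$ holds with this precise scaling, so that the frequency-side integral is literally the same $A_{nm}$ as on the time side, is therefore the step that must be checked with care; everything else reduces to the parity observation and the already-established Lemma~\ref{ortho}.
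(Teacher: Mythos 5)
Your proposal is correct and follows essentially the same route as the paper: reduce the frequency-side integral to the time-side one via $\hat h_n=(-i)^n h_n$, obtaining the factor $1+(-i)^n i^m$, and then dispose of the cases $|n-m|=1$ by parity, $|n-m|=2$ by the vanishing of that factor, and $|n-m|>2$ by Lemma~\ref{ortho}. Your additional remarks (the symmetry $A_{nm}=A_{mn}$, the positivity at $n=m$, and the observation that orthogonality genuinely fails in $\mathcal{H}_1$ alone when $|n-m|=2$) are accurate but not needed beyond what the paper records.
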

\begin{proof}
    We will use the fact that the Hermite functions are eigenvalues of the Fourier transform: $\mathcal{F}(H_n(\sqrt{2\pi}x)e^{-\pi x^2})(\xi) = (-i)^n H_n(\sqrt{2\pi}\xi)e^{-\pi \xi^2}$. For any $n, m\geq 0$ we have
\begin{align*}
    \innerproduct{H_n(\sqrt{2\pi}x)&e^{-\pi x^2}}{H_m(\sqrt{2\pi}x)e^{-\pi x^2}}_{\mathcal{H}} \\
    &=\int_{\mathbb{R}}H_n(\sqrt{2\pi}x)e^{-\pi x^2}\overline{H_m(\sqrt{2\pi}x)e^{-\pi x^2}}(1+x^2)dx \\
    & +\int_{\mathbb{R}}\mathcal{F}(H_n(\sqrt{2\pi}x)e^{-\pi x^2})(\xi)\overline{\mathcal{F}(H_m(\sqrt{2\pi}x)e^{-\pi x^2})(\xi)}(1+\xi^2)d\xi \\
    &= \int_{\mathbb{R}}H_n(\sqrt{2\pi}x)H_m(\sqrt{2\pi}x)e^{-2\pi x^2}(1+x^2)dx + \\
    & +\int_{\mathbb{R}}(-i)^nH_n(\sqrt{2\pi}\xi)e^{-\pi \xi^2}(\xi)\overline{(-i)^m}H_m(\sqrt{2\pi}\xi)e^{-\pi \xi^2}(1+\xi^2)d\xi \\
    &= \int_{\mathbb{R}}H_n(\sqrt{2\pi}x)H_m(\sqrt{2\pi}x)e^{-2\pi x^2}(1+x^2)dx (1+(-i)^n i^m).
\end{align*}
    Note that if $n,m$ differs by one, then $H_nH_m$ is an odd function, and so the integral is zero. If $n,m$ differ by two, then $(1+(-i)^n i^m)=0$. If $n,m$ differ by more than two, then the integral is zero by Lemma \ref{ortho}. Thus, if $n\neq m$, then $\innerproduct{H_n(\sqrt{2\pi}x)e^{-\pi x^2}}{H_m(\sqrt{2\pi}x)e^{-\pi x^2}}_{\mathcal{H}} = 0$.
\end{proof}
To normalize the functions, we need to find the norms of $H_n(\sqrt{2\pi}x) e^{-\pi x^2}$.
\begin{lemma}\label{norm}
    For every $n \geq 0$,
    $$\int_{\mathbb{R}} H_n^2(\sqrt{2\pi}x)e^{-2\pi x^2}(1+x^2)dx = 2^{n-\frac{3}{2}}  n!\left(n+2\pi+\frac{1}{2}\right)\frac{1}{\pi}.$$
Thus 
    $$\| H_n(\sqrt{2\pi}x)e^{-\pi x^2}\|^2_\mathcal{H} = 2^{n-\frac{1}{2}}  n!\left(n+2\pi+\frac{1}{2}\right)\frac{1}{\pi}.$$
\end{lemma}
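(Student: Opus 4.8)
The plan is to reduce everything to the classical orthogonality relation $\int_{\mathbb{R}} H_n(u)H_m(u)e^{-u^2}\,du = 2^n n!\sqrt{\pi}\,\delta_{nm}$ for the physicists' Hermite polynomials. First I would substitute $u=\sqrt{2\pi}\,x$, so that $1+x^2 = 1+\tfrac{u^2}{2\pi}$ and $dx = \tfrac{du}{\sqrt{2\pi}}$, which turns the integral into
$$\frac{1}{\sqrt{2\pi}}\left(\int_{\mathbb{R}} H_n^2(u)e^{-u^2}\,du + \frac{1}{2\pi}\int_{\mathbb{R}} u^2 H_n^2(u)e^{-u^2}\,du\right).$$
The first term is immediately $2^n n!\sqrt{\pi}$, so the only genuine work is the quadratic moment.

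To handle $\int_{\mathbb{R}} u^2 H_n^2(u) e^{-u^2}\,du$, I would use the three-term recurrence $u H_n(u) = \tfrac12 H_{n+1}(u) + n H_{n-1}(u)$. Squaring gives $u^2 H_n^2 = \tfrac14 H_{n+1}^2 + n H_{n+1}H_{n-1} + n^2 H_{n-1}^2$, and upon integrating against $e^{-u^2}$ the cross term vanishes by orthogonality. What remains are two diagonal integrals, which yield
$$\tfrac14\cdot 2^{n+1}(n+1)!\sqrt{\pi} + n^2\cdot 2^{n-1}(n-1)!\sqrt{\pi} = 2^{n-1}(2n+1)\,n!\sqrt{\pi}.$$

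Substituting these two moments back and collecting the powers of $2$ together with the factor $\sqrt{\pi}/\sqrt{2\pi}=2^{-1/2}$ should give
$$2^{n-3/2}n!\left(2 + \frac{2n+1}{2\pi}\right) = 2^{n-3/2}n!\,\frac{2n+4\pi+1}{2\pi},$$
and rewriting $\tfrac{2n+4\pi+1}{2\pi}=\tfrac1\pi\bigl(n+2\pi+\tfrac12\bigr)$ produces the stated first identity. The computation is elementary, so I do not anticipate a real obstacle; the only care needed is tracking the constant precisely, in particular the factor $\tfrac{1}{2\pi}$ coming from $x^2 = u^2/(2\pi)$.

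For the $\mathcal{H}$-norm I would not recompute anything. The proof of Lemma~\ref{BH_basis} already shows that
$$\|H_n(\sqrt{2\pi}x)e^{-\pi x^2}\|_{\mathcal{H}}^2 = \bigl(1+(-i)^n i^n\bigr)\int_{\mathbb{R}} H_n^2(\sqrt{2\pi}x)e^{-2\pi x^2}(1+x^2)\,dx,$$
and since $(-i)^n i^n = \bigl((-i)i\bigr)^n = 1$, the prefactor equals $2$. Doubling the first identity then gives $2^{n-1/2}n!\bigl(n+2\pi+\tfrac12\bigr)\tfrac1\pi$, as claimed.
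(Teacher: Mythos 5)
Your proof is correct, and it reaches the stated constants by a route that differs from the paper's in the one place where real computation happens. The paper evaluates $\int_{\mathbb{R}} H_n^2(\sqrt{2\pi}x)e^{-2\pi x^2}(1+x^2)\,dx$ by combining the Rodrigues formula $H_n(x)=(-1)^n e^{x^2}\tfrac{d^n}{dx^n}e^{-x^2}$ with $n$ integrations by parts to get the moments $\int x^{m}H_n(\sqrt{2\pi}x)e^{-2\pi x^2}\,dx$ for $m=n$ and $m=n+2$ (zero for $m<n$), and then substitutes the explicit power-series expansion of $H_n$, keeping only its top two terms. You instead reduce to the classical normalization $\int H_n^2 e^{-u^2}\,du=2^n n!\sqrt{\pi}$ and handle the quadratic moment via the three-term recurrence $uH_n=\tfrac12 H_{n+1}+nH_{n-1}$, so that $\int u^2H_n^2e^{-u^2}\,du=2^{n-1}(2n+1)\,n!\sqrt{\pi}$ after the cross term dies by orthogonality; your arithmetic ($2^{n-3/2}n!\,\tfrac{2n+4\pi+1}{2\pi}$) checks out, including the degenerate case $n=0$ where the $H_{n-1}$ term is absent. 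Your approach buys a shorter computation that avoids the explicit series for $H_n$ and only invokes two textbook identities; the paper's approach is self-contained in the sense that it reuses the same Rodrigues-plus-integration-by-parts device already deployed in Lemma \ref{ortho} and Lemma \ref{bas}, so no new facts about Hermite polynomials need to be imported. Your derivation of the second identity from the first, via the factor $1+(-i)^n i^n=2$ extracted from the computation in Lemma \ref{BH_basis} specialized to $n=m$, is exactly what the paper's ``Thus'' tacitly relies on.
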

\begin{proof}
    Notice first, that using again $H_n(x) = (-1)^n e^{x^2}\frac{d^n}{dx^n}e^{-x^2}$ and $n$ integrations by parts, we obtain
\begin{align*}
    \int_{\mathbb{R}}x^n H_n(\sqrt{2\pi}x)e^{-2\pi x^2}dx &= (2\pi)^{-\frac{n+1}{2}}\int_{\mathbb{R}}x^n H_n(x)e^{-x^2}dx=(2\pi)^{-\frac{n+1}{2}} \int_{\mathbb{R}}x^n (-1)^n \frac{d^n}{dx^n}e^{-x^2}dx \\
    &= (2\pi)^{-\frac{n+1}{2}} n! \int_{\mathbb{R}}e^{-x^2}dx = (2\pi)^{-\frac{n+1}{2}} n!\sqrt{\pi},
\end{align*}
\begin{align*}
    \int_{\mathbb{R}}x^{n+2} H_n(\sqrt{2\pi}x)e^{-2\pi x^2}dx &
    =(2\pi)^{-\frac{n+3}{2}}\int_{\mathbb{R}}x^{n+2} (-1)^n \frac{d^n}{dx^n}e^{-x^2}dx \\
    &= (2\pi)^{-\frac{n+3}{2}}\frac{(n+2)!}{2!} \int_{\mathbb{R}}x^2e^{-x^2}dx = (2\pi)^{-\frac{n+3}{2}}\frac{(n+2)!}{2}\frac{\sqrt{\pi}}{2},
\end{align*}
    for any $m<n$ 
    $$\int_{\mathbb{R}}x^m H_n(\sqrt{2\pi}x)e^{-2\pi x^2}dx = 0.$$
    Combining this with the fact that $H_n$ can be written as 
    $$H_n(\sqrt{2\pi}x) = n!\sum_{m=0}^{\lfloor n/2 \rfloor}\frac{(-1)^m}{m!(n-2m)!}(2\sqrt{2\pi}x)^{n-2m},$$
    we have
\begin{align*}
    &\int_{\mathbb{R}} H_n^2(\sqrt{2\pi}x)e^{-2\pi x^2}(1+x^2)dx \\
    &= \int_{\mathbb{R}} \left((2\sqrt{2\pi})^nx^n - \frac{n!}{(n-2)!}(2\sqrt{2\pi}x)^{n-2}\right)H_n(\sqrt{2\pi}x)e^{-2\pi x^2}(1+x^2)dx \\
    &= (2\sqrt{2\pi})^n (2\pi)^{-\frac{n+1}{2}}n!\sqrt{\pi} + 
    (2\sqrt{2\pi})^n (2\pi)^{-\frac{n+3}{2}}\frac{(n+2)!}{2}\frac{\sqrt{\pi}}{2} - \frac{(2\sqrt{2\pi})^{n-2}n!}{(n-2)!}(2\pi)^{-\frac{n+1}{2}}n!\sqrt{\pi} \\
    &= 2^n (2\pi)^{-\frac{3}{2}} n!\sqrt{\pi}\left(n+2\pi+\frac{1}{2}\right).
\end{align*}
\end{proof}
It remains to show that the Hermite functions constitute a complete sequence in $\mathcal{H}$.
The idea of doing so is based on \cite{lyubarskii_convergence_1999}. To this end we introduce the Bargmann transform
$$\mathfrak{B}: f\to F(z) = (\mathfrak{B}f)(z) = \frac{2^{1/4}}{\pi^{3/2}}\int_{\mathbb{R}} f\left(\frac{t}{\sqrt{2\pi}}\right)e^{2tz-z^2-t^2/2}dt.$$
If we let $\mathcal{B}_\beta$ be the space of all entire functions satisfying 
$$||F||^2_{\mathcal{B}_\beta} = \int_\mathbb{C}|F(z)|^2e^{-2|z|^2}\left(2\pi-\frac{1}{2}+2|z|^{2}\right)^{\beta}dA < \infty,$$
where $dA$ denote the Lebesgue area measure on $\mathbb{C}$, then by \cite{lyubarskii_convergence_1999} $\mathfrak{B}$ is, up to a constant factor, a unitary linear operator from $L^2(\mathbb{R})$ onto $\mathcal{B}_0$, and a bounded invertible mapping from $\mathcal{H}$ onto $\mathcal{B}_1$. We will improve this result by showing that $\mathfrak{B}$ is also a unitary operator from $\mathcal{H}$ onto $\mathcal{B}_1$. 
Proving that it sends an orthonormal basis of $\mathcal{B}_1$ to the orthonormal set of Hermite functions, we will conclude the proof.\\
\textbf{Remark.} It is worth noting that the above are the only two cases when the Bargmann transform can be a unitary operator between the Schwartz scale $\chi_\beta$ consisting of functions such that
\begin{align*}
    ||f||^2_{\chi_\beta} = \int_{\mathbb{R}} |f(x)|^2(1+x^{2\beta}) dx +  \int_{\mathbb{R}} |\hat{f}(\xi)|^2(1+\xi^{2\beta}) d\xi < \infty,
\end{align*}
and the corresponding space $\mathcal{B}_\beta$.

\begin{lemma}\label{basFock}
    The system $\left\{\sqrt{\frac{2^{n+1}}{n!(n+2\pi+1/2)\pi}} z^n, n\geq 0\right\}$ forms an orthonormal basis of $\mathcal{B}_1$.
\end{lemma}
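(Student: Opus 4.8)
The plan is to establish orthogonality and compute the norms by exploiting the radial symmetry of the weight, and then to prove completeness by a term-by-term integration of the Taylor series justified by nonnegativity. The orthonormality is essentially a calculation; the completeness is where the real content lies.

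First I would pass to polar coordinates $z = re^{i\theta}$, so that $dA = r\,dr\,d\theta$ and the weight $w(z) = e^{-2|z|^2}\left(2\pi - \tfrac12 + 2|z|^2\right)$ depends only on $r$. For two monomials this gives
$$\innerproduct{z^n}{z^m}_{\mathcal{B}_1} = \left(\int_0^{2\pi} e^{i(n-m)\theta}\,d\theta\right)\int_0^\infty r^{n+m+1}e^{-2r^2}\left(2\pi - \tfrac{1}{2} + 2r^2\right)dr,$$
and since the angular factor vanishes unless $n = m$, the monomials are pairwise orthogonal. For the norms I would reduce the radial integral to the standard Gaussian moments $\int_0^\infty r^{2k+1}e^{-2r^2}\,dr = \frac{k!}{2^{k+2}}$, taking $k = n$ and $k = n+1$ for the two terms coming from $2\pi - \tfrac12$ and $2r^2$. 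Combining them yields
$$\|z^n\|^2_{\mathcal{B}_1} = 2\pi\left[\left(2\pi - \tfrac12\right)\frac{n!}{2^{n+2}} + 2\,\frac{(n+1)!}{2^{n+3}}\right] = \frac{\pi\, n!}{2^{n+1}}\left(n + 2\pi + \tfrac12\right),$$
so that the reciprocal square root is exactly the stated normalizing constant $\sqrt{2^{n+1}/(n!(n+2\pi+1/2)\pi)}$, confirming orthonormality.

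The main work is completeness. Every $F \in \mathcal{B}_1$ is entire, hence has a globally convergent Taylor expansion $F(z) = \sum_{n\geq 0} a_n z^n$. I would insert this series into $\|F\|^2_{\mathcal{B}_1}$ in polar coordinates; for each fixed $r$ the series converges uniformly in $\theta$, so Parseval's identity for the Fourier system $\{e^{ik\theta}\}_{k\in\mathbb{Z}}$ turns the angular integral into $2\pi\sum_n |a_n|^2 r^{2n}$. The crucial point — and the step I expect to be the main obstacle — is justifying the interchange of summation and radial integration; since all integrands are nonnegative, Tonelli's theorem applies and gives
$$\|F\|^2_{\mathcal{B}_1} = \sum_{n\geq 0}|a_n|^2 \|z^n\|^2_{\mathcal{B}_1}.$$
This Parseval-type identity shows that $\big\|F - \sum_{n\leq N} a_n z^n\big\|^2_{\mathcal{B}_1} = \sum_{n > N}|a_n|^2\|z^n\|^2_{\mathcal{B}_1} \to 0$ as $N\to\infty$, so the partial sums converge to $F$ in the norm of $\mathcal{B}_1$. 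Hence the normalized monomials are not merely orthonormal but span a dense subspace, which completes the proof.
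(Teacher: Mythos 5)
Your proof is correct, and its orthogonality and norm computations coincide with the paper's: polar coordinates, vanishing of the angular integral for $n\neq m$, and a Gamma-function evaluation of the radial moments leading to the same value $\|z^n\|^2_{\mathcal{B}_1}=\frac{\pi\,n!}{2^{n+1}}\left(n+2\pi+\tfrac{1}{2}\right)$. Where you genuinely diverge is the completeness step. The paper argues that the span of the monomials has trivial orthogonal complement: if $F=\sum_n a_nz^n$ and $\langle F,z^m\rangle_{\mathcal{B}_1}=0$ for all $m$, then $a_m=0$ for all $m$. That is shorter, but it quietly relies on the identity $\langle F,z^m\rangle_{\mathcal{B}_1}=a_m\|z^m\|^2_{\mathcal{B}_1}$, i.e.\ on an interchange of summation and integration that the paper does not spell out (one can justify it by integrating over $|z|<\rho$, where the Taylor series converges uniformly, and letting $\rho\to\infty$ by dominated convergence, since $F\overline{z^m}$ times the weight is integrable by Cauchy--Schwarz). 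You instead prove the full Parseval identity $\|F\|^2_{\mathcal{B}_1}=\sum_n|a_n|^2\|z^n\|^2_{\mathcal{B}_1}$ via Parseval in $\theta$ and Tonelli in $r$, where nonnegativity makes the interchange free, and then apply the same identity to $F-\sum_{n\le N}a_nz^n$ (which is entire, lies in $\mathcal{B}_1$ since each monomial does, and has Taylor coefficients $a_n$ for $n>N$ and $0$ otherwise) to conclude that the tail norm tends to zero. This buys you a self-contained argument and the slightly stronger conclusion that every $F$ is the norm limit of its Taylor sections, not merely that the monomial span is dense; the paper's route buys brevity at the cost of leaving the measure-theoretic justification to the reader. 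Both are sound.
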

\begin{proof}
    Orthogonality is a trivial computation, while completeness follows from analyticity of functions in $\mathcal{B}_1$ (any $f\in \mathcal{B}_1$ can be written as $f(z) = \sum_{n\geq 0} a_n z^n$, and so $\innerproduct{f}{z^m}_{\mathcal{B}_1}=0$ for every $m$ implies that $a_m=0$ for every $m$). Now,
    \begin{align*}
        \|z^n\|^2_{\mathcal{B}_1} &= \int_\mathbb{C}|z|^{2n}e^{-2|z|^2}\left(2\pi-\frac{1}{2}+2|z|^{2}\right)dA = 2\pi \int_0^{\infty} r^{2n+1} e^{-2r^2} \left(2\pi-\frac{1}{2}+2r^{2}\right)dr \\
        &= 2\pi \int_0^{\infty} (t/2)^{n+\frac{1}{2}} e^{-t} (2\pi-\frac{1}{2}+t) \frac{dt}{4\sqrt{t/2}} = \frac{2\pi}{4\cdot 2^n} \int_0^{\infty} t^{n} e^{-t} (2\pi-\frac{1}{2}+t) dt \\
        &= \frac{\pi}{2^{n+1}} \left(\left(2\pi-\frac{1}{2}\right)n! + (n+1)!\right) = \frac{\pi n!}{2^{n+1}} (n +2\pi+ 1/2),
    \end{align*}
    which concludes the proof.
\end{proof}

\begin{lemma}\label{bas}
    We have
    $$\mathfrak{B}(H_n(\sqrt{2\pi}x)e^{-\pi x^2})(z) = \frac{\sqrt{2}}{\pi^{1/4}} (2z)^n, $$
    and therefore 
    $$\mathfrak{B}^{-1}\left(\sqrt{\frac{2^{n+1}}{n!(n+2\pi+1/2)\pi}} z^n\right) = \sqrt{\frac{\pi}{2^{n-\frac{1}{2}}  n!\left(n+2\pi+\frac{1}{2}\right)}} H_n(\sqrt{2\pi}x)e^{-\pi x^2}.$$
\end{lemma}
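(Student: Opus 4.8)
The plan is to compute $\mathfrak{B}(H_n(\sqrt{2\pi}x)e^{-\pi x^2})$ directly from the definition of the Bargmann transform and then read off the inverse formula by rescaling. First I would substitute $x = t/\sqrt{2\pi}$ into the integrand $f(t/\sqrt{2\pi})$ with $f(x) = H_n(\sqrt{2\pi}x)e^{-\pi x^2}$. Since $H_n(\sqrt{2\pi}\cdot t/\sqrt{2\pi}) = H_n(t)$ and $e^{-\pi(t/\sqrt{2\pi})^2} = e^{-t^2/2}$, the Gaussian factor $e^{-t^2/2}$ coming from $f$ combines with the $e^{-t^2/2}$ present in the kernel $e^{2tz - z^2 - t^2/2}$, so that the whole integrand collapses to $H_n(t)e^{2tz - z^2 - t^2} = H_n(t)e^{-(t-z)^2}$. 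Thus the claim reduces to the single integral identity
$$\int_{\mathbb{R}} H_n(t)e^{-(t-z)^2}\,dt = \sqrt{\pi}\,(2z)^n,$$
after which multiplying by the prefactor $2^{1/4}/\pi^{3/2}$ yields the first formula.

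To establish this integral identity I would use the Rodrigues formula $H_n(t) = (-1)^n e^{t^2}\frac{d^n}{dt^n}e^{-t^2}$, exactly as in Lemmas \ref{ortho} and \ref{norm}. Writing $e^{t^2}e^{-(t-z)^2} = e^{2tz - z^2}$, the integral becomes $\int_{\mathbb{R}}(-1)^n\big(\frac{d^n}{dt^n}e^{-t^2}\big)e^{2tz-z^2}\,dt$, and $n$ integrations by parts (the boundary terms vanishing thanks to the Gaussian decay) move all derivatives onto $e^{2tz-z^2}$, producing a factor $(2z)^n$ and leaving $(2z)^n\int_{\mathbb{R}}e^{-(t-z)^2}\,dt$. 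A shorter alternative is the generating function $\sum_n H_n(t)s^n/n! = e^{2ts-s^2}$: integrating it against $e^{-(t-z)^2}$, completing the square in the resulting Gaussian, and matching Taylor coefficients in $s$ gives the same result.

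Finally, the inverse formula is a purely algebraic consequence of the first. Solving the first identity for $\mathfrak{B}^{-1}(z^n)$ and inserting this into the normalized monomial $\sqrt{2^{n+1}/(n!(n+2\pi+1/2)\pi)}\,z^n$, the constants rearrange — with the norm value $\|H_n(\sqrt{2\pi}x)e^{-\pi x^2}\|^2_{\mathcal{H}}$ from Lemma \ref{norm} and $\|z^n\|_{\mathcal{B}_1}^2$ from Lemma \ref{basFock} serving as a consistency check — into the stated expression $\sqrt{\pi/(2^{n-1/2}n!(n+2\pi+1/2))}\,H_n(\sqrt{2\pi}x)e^{-\pi x^2}$. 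This step is routine; the only genuine point requiring care is that $z$ is complex, so the Gaussian evaluation $\int_{\mathbb{R}}e^{-(t-z)^2}\,dt = \sqrt{\pi}$ needs a brief justification by analytic continuation or a contour shift, using that the integrand is entire in $z$ and decays rapidly (and, for the generating-function route, that the interchange of summation and integration is legitimate by absolute convergence).
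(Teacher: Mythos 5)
Your proposal is correct and follows essentially the same route as the paper: Rodrigues' formula $H_n(t)=(-1)^ne^{t^2}\frac{d^n}{dt^n}e^{-t^2}$ plus $n$ integrations by parts to evaluate $\int_{\mathbb{R}}H_n(t)e^{-(t-z)^2}\,dt=\sqrt{\pi}\,(2z)^n$, followed by the algebraic rescaling for the inverse; your extra remark on justifying the complex Gaussian shift is a point the paper leaves implicit. One caveat: the constant you obtain, $2^{1/4}\pi^{-1}(2z)^n$, agrees with what the paper's own proof derives but not with the $\sqrt{2}\,\pi^{-1/4}(2z)^n$ displayed in the lemma statement (an apparent typo there), and the second formula --- the one actually used later --- is consistent with your value.
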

\begin{proof}
    The first part follows by $H_n(x) = (-1)^n e^{x^2}\frac{d^n}{dx^n}e^{-x^2}$ and $n$ integrations by parts:
\begin{align*}
    \mathfrak{B}(H_n(\sqrt{2\pi}x)e^{-\pi x^2})(z) &= \frac{2^{1/4}}{\pi^{3/2}}\int_{\mathbb{R}} H_n(x)e^{-x^2/2} e^{2xz-z^2-x^2/2}dx \\ 
    &= \frac{2^{1/4}}{\pi^{3/2}}\int_{\mathbb{R}} (-1)^n\left( \frac{d^n}{dx^n}e^{-x^2}\right) e^{2xz-z^2}dx \\ 
    &=\frac{2^{1/4}}{\pi^{3/2}}\int_{\mathbb{R}} e^{-x^2} (2z)^n e^{2xz-z^2}dx  \\
    &=  \frac{2^{n+1/4}}{\pi} z^n.
\end{align*}
    This means that 
    $$\mathfrak{B}^{-1}(z^n) =  
     \frac{\pi}{2^{n+1/4}}e^{-\pi x^2}H_n(\sqrt{2\pi}x), $$
     and thus
 \begin{align*}
     \mathfrak{B}^{-1}\left(\sqrt{\frac{2^{n+1}}{n!(n+2\pi+1/2)\pi}} z^n\right) &=  
     \sqrt{\frac{2^{n+1}}{n!(n+2\pi+1/2)\pi}}\frac{\pi}{2^{n+1/4}}e^{-\pi x^2}H_n(\sqrt{2\pi}x) \\
     &= \sqrt{\frac{\pi}{2^{n-\frac{1}{2}}  n!\left(n+2\pi+\frac{1}{2}\right)}} H_n(\sqrt{2\pi}x)e^{-\pi x^2}.
 \end{align*}
\end{proof}
\begin{theorem}\label{ONB}
    The Bargmann transform is a unitary operator from $\mathcal{H}$ onto $\mathcal{B}_1$.
    Thus, the system $\left\{\sqrt{\frac{\pi}{2^{n-\frac{1}{2}}  n!\left(n+2\pi+\frac{1}{2}\right)}} H_n(\sqrt{2\pi}x)e^{-\pi x^2}, n\geq 0 \right\}$ forms an orthonormal basis of $\mathcal{H}$.
\end{theorem}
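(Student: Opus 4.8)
The plan is to assemble the four preceding lemmas into a single completeness statement and then read off both assertions of the theorem. Write
$v_n = \sqrt{\pi/(2^{n-1/2}\,n!\,(n+2\pi+1/2))}\,H_n(\sqrt{2\pi}x)e^{-\pi x^2}$
and
$u_n = \sqrt{2^{n+1}/(n!\,(n+2\pi+1/2)\pi)}\,z^n$.
By Lemmas \ref{BH_basis} and \ref{norm} the system $\{v_n\}$ is orthonormal in $\mathcal{H}$, since the prefactor of $v_n$ is exactly the reciprocal of the $\mathcal{H}$-norm of $H_n(\sqrt{2\pi}\,\cdot)e^{-\pi(\cdot)^2}$ computed in Lemma \ref{norm}. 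By Lemma \ref{basFock} the system $\{u_n\}$ is an orthonormal basis of $\mathcal{B}_1$, and by Lemma \ref{bas} we have $\mathfrak{B}v_n = u_n$ for every $n\geq 0$.

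The only missing ingredient is completeness of $\{v_n\}$ in $\mathcal{H}$, and this is where I would invoke the cited result \cite{lyubarskii_convergence_1999} that $\mathfrak{B}$ is a bounded and boundedly invertible linear map of $\mathcal{H}$ onto $\mathcal{B}_1$ (so that $\mathfrak{B}^{-1}$ is bounded as well, by the open mapping theorem). I would transfer completeness across this isomorphism: given $f\in\mathcal{H}$, the image $\mathfrak{B}f\in\mathcal{B}_1$ is approximated in $\mathcal{B}_1$-norm by finite linear combinations of the $u_n$, and applying the bounded operator $\mathfrak{B}^{-1}$, together with the identity $\mathfrak{B}^{-1}u_n = v_n$, shows that $f=\mathfrak{B}^{-1}(\mathfrak{B}f)$ is approximated in $\mathcal{H}$ by the corresponding combinations of the $v_n$. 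Hence the finite span of $\{v_n\}$ is dense in $\mathcal{H}$, and combined with orthonormality this makes $\{v_n\}$ an orthonormal basis of $\mathcal{H}$, which is the second assertion of the theorem.

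For unitarity I would then use that $\mathfrak{B}$ carries the orthonormal basis $\{v_n\}$ of $\mathcal{H}$ onto the orthonormal basis $\{u_n\}$ of $\mathcal{B}_1$. For any $f=\sum_n c_n v_n\in\mathcal{H}$, continuity of $\mathfrak{B}$ gives $\mathfrak{B}f = \sum_n c_n u_n$, so by Parseval in each space $\|\mathfrak{B}f\|_{\mathcal{B}_1}^2 = \sum_n |c_n|^2 = \|f\|_{\mathcal{H}}^2$. Thus $\mathfrak{B}$ is an isometry; since it is also surjective (its range is closed, being the isometric image of a complete space, and it contains the dense span of $\{u_n\}$), it is unitary.

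The essential analytic work is carried by the imported boundedness and invertibility of $\mathfrak{B}$ between $\mathcal{H}$ and $\mathcal{B}_1$, so the main obstacle lies not in the present theorem but in that input; equivalently, the crux is the density of the span of the scaled Hermite functions in $\mathcal{H}$. Were one to avoid citing the quantitative $\mathcal{B}_1$-version, one would instead establish this density directly, using that the polynomials are dense in $\mathcal{B}_1$ (immediate from power-series convergence, exactly as in the completeness step of Lemma \ref{basFock}) together with the mapping properties of $\mathfrak{B}$; the remaining Hilbert-space bookkeeping is routine.
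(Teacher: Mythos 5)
Your proposal is correct and follows essentially the same route as the paper: both rest on the cited boundedness and invertibility of $\mathfrak{B}$ from $\mathcal{H}$ onto $\mathcal{B}_1$, the correspondence $\mathfrak{B}v_n = u_n$ from Lemma \ref{bas}, and the orthonormality facts from Lemmas \ref{BH_basis}--\ref{basFock}. Your write-up is merely a bit more explicit than the paper's about where surjectivity (equivalently, completeness of the $v_n$) comes from, which is a point the paper's phrase ``this proves that $\mathfrak{B}^{-1}$ is a unitary transformation'' leaves implicit.
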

\begin{proof}
    Our lemmas prove that for any $n,m \geq 0$,
\begin{align*}
    &\left\langle \sqrt{\frac{2^{n+1}}{n!(n+2\pi+1/2)\pi}}z^n,\sqrt{\frac{2^{m+1}}{m!(m+2\pi+1/2)\pi}}z^m\right\rangle_{\mathcal{B}_1} \\
    &=\left\langle \mathfrak{B}^{-1}\left(\sqrt{\frac{2^{n+1}}{n!(n+2\pi+1/2)\pi}}z^n\right),\mathfrak{B}^{-1}\left(\sqrt{\frac{2^{m+1}}{m!(m+2\pi+1/2)\pi}}z^m\right)\right\rangle_{\mathcal{H}}.
\end{align*}
Since $\left\{\sqrt{\frac{2^{n+1}}{n!(n+2\pi+1/2)\pi}}z^n, n\geq 0\right\}$ forms an orthonormal basis of $\mathcal{B}_1$, this proves that $\mathfrak{B}^{-1}$ is a unitary transformation $\mathcal{B}_1 \to \mathcal{H}$. By lemma \ref{bas}, since a unitary map sends an orthonormal basis to an orthonormal basis, $\left\{\sqrt{\frac{\pi}{2^{n-\frac{1}{2}}  n!\left(n+2\pi+\frac{1}{2}\right)}} H_n(\sqrt{2\pi}x)e^{-\pi x^2}, n\geq 0 \right\}$ is an orthonormal basis for $\mathcal{H}$.
\end{proof}
Having obtained an orthonormal basis, we can now turn our attention to finding the reproducing kernel of $\mathcal{H}$.
\begin{theorem}\label{repker}
    The reproducing kernel $K_x(y)$ of $\mathcal{H}$ is given by
    $$K_x(y) = \sqrt{2} \pi e^{-\pi(x^2+y^2)} \int_0^1 \frac{t^{2\pi-1/2}}{\sqrt{1-t^2}} e^{2\pi \frac{2xyt-(x^2+y^2)t^2}{1-t^2}} dt.$$
\end{theorem}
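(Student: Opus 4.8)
The plan is to expand the kernel in the orthonormal basis produced in Theorem \ref{ONB}, replace the inconvenient denominator $(n+2\pi+1/2)^{-1}$ by an elementary integral, and then collapse the resulting Hermite series via Mehler's formula so that the $t$-integral is all that remains.

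First I would use the standard fact that in a reproducing kernel Hilbert space with orthonormal basis $\{e_n\}$ one has $K_x(y)=\sum_{n\ge 0}e_n(x)\overline{e_n(y)}$, the series converging absolutely: since $\mathcal H$ is an RKHS (see the introduction), $\sum_n|e_n(x)|^2=\|K_x\|_{\mathcal H}^2<\infty$, and Cauchy--Schwarz controls the cross terms. Inserting the explicit basis $e_n(x)=\sqrt{\tfrac{\pi}{2^{n-1/2}n!(n+2\pi+1/2)}}H_n(\sqrt{2\pi}x)e^{-\pi x^2}$ and using that the Hermite functions are real-valued, this yields
\begin{align*}
K_x(y)=\sqrt{2}\,\pi\, e^{-\pi(x^2+y^2)}\sum_{n\ge 0}\frac{1}{2^n n!}\,\frac{H_n(\sqrt{2\pi}x)\,H_n(\sqrt{2\pi}y)}{n+2\pi+1/2},
\end{align*}
where the constant is obtained from $\tfrac{\pi}{2^{n-1/2}}=\sqrt{2}\,\pi\,2^{-n}$.

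Next I would write $\frac{1}{n+2\pi+1/2}=\int_0^1 t^{\,n+2\pi-1/2}\,dt$, interchange summation and integration, and recognize the inner sum. After the interchange we get
\begin{align*}
K_x(y)=\sqrt{2}\,\pi\, e^{-\pi(x^2+y^2)}\int_0^1 t^{\,2\pi-1/2}\sum_{n\ge 0}\frac{t^n}{2^n n!}H_n(\sqrt{2\pi}x)H_n(\sqrt{2\pi}y)\,dt,
\end{align*}
and the inner sum is exactly Mehler's formula, which for $|t|<1$ reads $\sum_{n\ge 0}\frac{t^n}{2^n n!}H_n(u)H_n(v)=(1-t^2)^{-1/2}\exp\!\left(\frac{2uvt-(u^2+v^2)t^2}{1-t^2}\right)$. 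Substituting $u=\sqrt{2\pi}x$, $v=\sqrt{2\pi}y$ turns the exponent into $2\pi\frac{2xyt-(x^2+y^2)t^2}{1-t^2}$ and contributes the factor $(1-t^2)^{-1/2}$, giving precisely the stated formula.

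The one genuine technical point is the interchange of sum and integral, which I would justify by Tonelli, i.e.\ by verifying $\sum_{n}\frac{|H_n(\sqrt{2\pi}x)\,H_n(\sqrt{2\pi}y)|}{2^n n!\,(n+2\pi+1/2)}<\infty$. By Cauchy--Schwarz it suffices to bound the diagonal series $\sum_n\frac{H_n(u)^2}{2^n n!\,(n+2\pi+1/2)}$. Here the weight $(n+2\pi+1/2)^{-1}$ is essential: the plain diagonal Mehler series $\sum_n\frac{H_n(u)^2}{2^n n!}t^n=(1-t^2)^{-1/2}\exp\!\left(\frac{2u^2 t}{1+t}\right)$ has a genuine $(1-t^2)^{-1/2}$ singularity as $t\to 1^-$, so the unweighted series diverges, but a Tauberian estimate (Hardy--Littlewood, applicable since the coefficients are nonnegative and the generating function grows like $(1-t)^{-1/2}$) gives $\sum_{n\le N}\frac{H_n(u)^2}{2^n n!}=O(\sqrt{N})$, and Abel summation against the decreasing weight $(n+2\pi+1/2)^{-1}$ then produces a convergent tail comparable to $\sum_n n^{-3/2}$. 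This convergence step is where the care lies; once it is in place, the substitution, the application of Mehler's formula, and the matching of constants (using Lemma \ref{norm} and Theorem \ref{ONB}) are routine.
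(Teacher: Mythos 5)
Your proposal is correct and follows essentially the same route as the paper: expand $K_x(y)$ in the orthonormal basis of Theorem \ref{ONB}, write $\frac{1}{n+2\pi+1/2}=\int_0^1 t^{n+2\pi-1/2}\,dt$, and collapse the resulting series with Mehler's formula. The only difference is that you supply an explicit Tonelli/Tauberian justification for interchanging the sum and the integral, which the paper leaves implicit; your constants and the form of Mehler's formula all check out.
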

\begin{proof}
    Using the orthonormal basis of $\mathcal{H}$ from Theorem \ref{ONB} we find that 
\begin{align*}
    K_x(y) &= \sum_{n=0}^{\infty} \frac{H_n(\sqrt{2\pi}x)e^{-\pi x^2} H_n(\sqrt{2\pi}y)e^{-\pi y^2} \pi}{2^{n-\frac{1}{2}}  n!\left(n+2\pi+\frac{1}{2}\right)}  \\ 
    &= \sqrt{2} \pi e^{-\pi(x^2+y^2)}\sum_{n=0}^{\infty} \frac{H_n(\sqrt{2\pi}x)H_n(\sqrt{2\pi}y)}{2^{n}n!(n+2\pi+1/2)}.
\end{align*}
    To get the integral representation we will use Mehler's Hermite polynomial formula \cite{Watson},
    $$\sum_{n=0}^{\infty} \frac{H_n(\sqrt{2\pi}x)H_n(\sqrt{2\pi}y)}{2^{n}n!} t^n = (1-t^2)^{-1/2} e^{2\pi \frac{2xyt-(x^2+y^2)t^2}{1-t^2}}.$$
    Multiplying both sides by $t^{2\pi-1/2}$ and then integrating from 0 to 1 gives
    $$\sum_{n=0}^{\infty} \frac{H_n(\sqrt{2\pi}x)H_n(\sqrt{2\pi}y)}{2^{n}n!(n+2\pi+1/2)} = \int_0^1 \frac{t^{2\pi-1/2}}{\sqrt{1-t^2}} e^{2\pi \frac{2xyt-(x^2+y^2)t^2}{1-t^2}} dt,$$
    which concludes the proof.
\end{proof}
The Fourier transform of the kernel, which will also be used later, can be found in a similar way and is given in the following theorem.
\begin{theorem}\label{ftrepker}
    The Fourier transform of the reproducing kernel $K_x(y)$ of $\mathcal{H}$ is given by
    $$\widehat{K_x}(y) = \sqrt{2} \pi e^{-\pi(x^2+y^2)} \int_0^1 \frac{t^{2\pi-1/2}}{\sqrt{1+t^2}} e^{2\pi \frac{-2xyti+(x^2+y^2)t^2}{1+t^2}} dt.$$
\end{theorem}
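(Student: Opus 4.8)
The plan is to mimic the proof of Theorem \ref{repker} almost verbatim, the only new ingredient being that the Fourier transform acts diagonally on the orthonormal basis of $\mathcal{H}$. Write $\phi_n$ for the normalized scaled Hermite function $\sqrt{\pi/(2^{n-1/2}n!(n+2\pi+1/2))}\,H_n(\sqrt{2\pi}x)e^{-\pi x^2}$ from Theorem \ref{ONB}, so that $K_x=\sum_{n\geq 0}\phi_n(x)\phi_n$, all $\phi_n$ being real. Since $\mathcal{H}$ is Fourier invariant with $\|\mathcal{F}f\|_{\mathcal{H}}=\|f\|_{\mathcal{H}}$, the Fourier transform is unitary on $\mathcal{H}$ and may be applied to this $\mathcal{H}$-convergent series term by term; as $\mathcal{H}$-convergence implies pointwise convergence, the resulting identity holds pointwise. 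Using the eigenrelation $\mathcal{F}(H_n(\sqrt{2\pi}x)e^{-\pi x^2})(\xi)=(-i)^n H_n(\sqrt{2\pi}\xi)e^{-\pi\xi^2}$ already recorded in the proof of Lemma \ref{BH_basis}, this gives
$$\widehat{K_x}(y)=\sqrt{2}\,\pi\,e^{-\pi(x^2+y^2)}\sum_{n=0}^{\infty}(-i)^n\,\frac{H_n(\sqrt{2\pi}x)H_n(\sqrt{2\pi}y)}{2^{n}n!\,(n+2\pi+1/2)}.$$

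Next I would convert this series into an integral exactly as in Theorem \ref{repker}, writing $\frac{1}{n+2\pi+1/2}=\int_0^1 t^{n+2\pi-1/2}\,dt$ and interchanging summation and integration. The interchange is legitimate by Tonelli's theorem, since $\int_0^1\sum_{n\geq0}\frac{|H_n(\sqrt{2\pi}x)H_n(\sqrt{2\pi}y)|}{2^n n!}t^{n+2\pi-1/2}\,dt=\sum_{n\geq0}\frac{|H_n(\sqrt{2\pi}x)H_n(\sqrt{2\pi}y)|}{2^n n!\,(n+2\pi+1/2)}<\infty$; the finiteness here is precisely the absolute convergence of the kernel series, which follows from the Cauchy–Schwarz bound $\sum_n|\phi_n(x)\phi_n(y)|\leq K_x(x)^{1/2}K_y(y)^{1/2}$ in the reproducing kernel Hilbert space. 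After the interchange the inner sum is $\sum_{n\geq 0}\frac{H_n(\sqrt{2\pi}x)H_n(\sqrt{2\pi}y)}{2^n n!}(-it)^n$, i.e. Mehler's formula evaluated at the complex argument $u=-it$.

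Substituting $u=-it$ into Mehler's identity turns $1-u^2$ into $1+t^2$ and the exponent $2\pi\frac{2xyu-(x^2+y^2)u^2}{1-u^2}$ into $2\pi\frac{-2xyti+(x^2+y^2)t^2}{1+t^2}$, so that multiplying by $t^{2\pi-1/2}$ and integrating over $[0,1]$ produces precisely the claimed formula. The one point requiring genuine care is the use of Mehler's formula at the non-real argument $u=-it$, which I would justify by analytic continuation: for fixed real $x,y$ both sides of Mehler's identity are holomorphic in $u$ on the unit disc $\{|u|<1\}$ and agree on the real segment $(-1,1)$, hence they agree throughout the disc, and in particular at $u=-it$ with $|{-it}|=t<1$ for $t\in[0,1)$. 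Note that, in contrast to Theorem \ref{repker}, the factor $(1+t^2)^{-1/2}$ is bounded on $[0,1]$, so no endpoint singularity arises and the resulting integrand is in fact continuous on all of $[0,1]$. This analytic continuation is essentially the only obstacle; everything else is a mechanical repetition of the computation proving Theorem \ref{repker}.
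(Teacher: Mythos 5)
Your proposal is correct and follows essentially the same route as the paper: expand $K_x$ in the Hermite basis, use the Fourier eigenrelation to insert the factor $(-i)^n$, and then apply Mehler's formula at the complex argument $-it$ (the paper phrases this as integrating Mehler's identity along the segment from $0$ to $-i$ and substituting $t\to -it$, which is the same computation). Your added justifications — unitarity of $\mathcal{F}$ on $\mathcal{H}$ for the termwise transform, Tonelli for the interchange, and analytic continuation of Mehler's formula into the unit disc — are all sound and merely make explicit what the paper leaves implicit.
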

\begin{proof}
    First, we will again use the facts that $$K_x(y) = \sqrt{2} \pi e^{-\pi(x^2+y^2)}\sum_{n=0}^{\infty} \frac{H_n(\sqrt{2\pi}x)H_n(\sqrt{2\pi}y)}{2^{n}n!(n+2\pi+1/2)},$$
    and that the Fourier transform of 
    $e^{-\pi x^2}H_n(\sqrt{2\pi}x)$ is equal to $(-i)^n e^{-\pi \xi^2}H_n(\sqrt{2\pi}\xi).$
    This gives 
    $$\widehat{K_x}(y) = \sqrt{2} \pi e^{-\pi(x^2+y^2)}\sum_{n=0}^{\infty} (-i)^n\frac{H_n(\sqrt{2\pi}x)H_n(\sqrt{2\pi}y)}{2^{n}n!(n+2\pi+1/2)}.$$
    Multiplying the Mehler’s Hermite polynomial formula by $t^{2\pi - 1/2}$ as before, but now integrating from $0$ to $-i$, we obtain
\begin{align*}
    \widehat{K_x}(y) &= \sqrt{2} \pi e^{-\pi(x^2+y^2)} (-i)^{-2\pi-1/2}\int_0^{-i} \frac{t^{2\pi-1/2}}{\sqrt{1-t^2}} e^{2\pi \frac{2xyt-(x^2+y^2)t^2}{1-t^2}} dt\\
    &= \sqrt{2} \pi e^{-\pi(x^2+y^2)} \int_0^1 \frac{t^{2\pi-1/2}}{\sqrt{1+t^2}} e^{2\pi \frac{-2xyti+(x^2+y^2)t^2}{1+t^2}} dt,
\end{align*}
    where in the last equality we used the change of variable $t \to -it$.
\end{proof}

\section{Eigenvalue problem for two-sided concentration}
Knowing the expression for the reproducing kernel allows us to study various concentration problems. The most natural one for the space $\mathcal{H}$ is the problem of maximizing the ratio
\begin{align*}
    \lambda = \frac{\int_{I} |f(x)|^2(1+x^2) dx+\int_{J} |\hat{f}(\xi)|^2(1+\xi^2) d\xi}{\int_{\mathbb{R}} |f(x)|^2 (1+x^2)dx+\int_{\mathbb{R}} |\hat{f}(\xi)|^2(1+\xi^2) d\xi}
\end{align*}
where $I,J\subset \mathbb{R}$ are measurable, which leads to the eigenvalue problem
$$\lambda_n f_n(t) = \int_{I} f_n(x)\overline{K_t(x)}(1+x^2)dx + \int_{J} \hat{f_n}(\xi)\overline{\hat{K_t}(\xi)}(1+\xi^2)d\xi,$$
where $K_t$ is the reproducing kernel of $\mathcal{H}$.
We define thus an operator $T_{I,J}:\mathcal{H} \to \mathcal{H}$ as
$$T_{I,J}f(t) = \int_{I} f(x)\overline{K_t(x)}(1+x^2)dx + \int_{J} \hat{f}(x)\overline{\hat{K_t}(x)}(1+x^2)dx.$$
We want to show that $T_{I,J}$ is a self-adjoint Hilbert--Schmidt operator and study its eigenvalues, which corresponds to the next best possible concentrations $\lambda$ obtained by orthogonal functions in the space.
\begin{lemma}
    For any measurable $I,J$ of finite Lebesgue measure, $T_{I,J}$ is a self-adjoint Hilbert--Schmidt operator.
\end{lemma}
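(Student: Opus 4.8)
The plan is to handle $T_{I,J}$ through the bounded sesquilinear form it represents rather than through its integral kernel directly, and only afterwards return to the kernel picture to estimate the Hilbert--Schmidt norm. First I would introduce
$$Q(f,g) = \int_I f(x)\overline{g(x)}(1+x^2)\,dx + \int_J \hat f(\xi)\overline{\hat g(\xi)}(1+\xi^2)\,d\xi$$
on $\mathcal H\times\mathcal H$. Since the full integrals over $\mathbb R$ of the two densities sum to $\|f\|_{\mathcal H}^2$, restricting to $I$ and $J$ gives $0\le Q(f,f)\le\|f\|_{\mathcal H}^2$; as $Q$ is Hermitian, the Cauchy--Schwarz inequality for nonnegative forms yields $|Q(f,g)|\le\|f\|_{\mathcal H}\|g\|_{\mathcal H}$. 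By the Riesz representation theorem there is a unique bounded operator $T_{I,J}$ with $\langle T_{I,J}f,g\rangle_{\mathcal H}=Q(f,g)$; it is self-adjoint and positive, dominated by the identity operator, so its spectrum lies in $[0,1]$. Applying the reproducing property $h(t)=\langle h,K_t\rangle_{\mathcal H}$ to $h=T_{I,J}f$ recovers exactly the integral formula in the definition, confirming that the form and the integral operator coincide.

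For the Hilbert--Schmidt property I would compute $\|T_{I,J}\|_{HS}^2$ in the orthonormal basis $\{e_n\}$ of scaled Hermite functions from Theorem~\ref{ONB}. Writing $T_{I,J}=P_I+P^J$, where $P_I$ and $P^J$ are the forms obtained by taking $J$ (resp. $I$) empty, it suffices to show each summand is Hilbert--Schmidt. Expanding $\langle P_I e_n,e_m\rangle_{\mathcal H}=\int_I e_n\overline{e_m}(1+x^2)\,dx$, summing with Parseval's identity (completeness of $\{e_n\}$), and using $K_x(y)=\sum_n e_n(x)e_n(y)$ collapses the double sum to
$$\|P_I\|_{HS}^2=\iint_{I\times I}|K_x(y)|^2(1+x^2)(1+y^2)\,dx\,dy.$$
Because $\hat e_n=(-i)^n e_n$ has unimodular Fourier phase, the identical computation gives $\|P^J\|_{HS}^2=\iint_{J\times J}|K_x(y)|^2(1+x^2)(1+y^2)\,dx\,dy$, the Fourier invariance of $\mathcal H$ making the frequency block identical in form to the time block.

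It then remains to bound this double integral. The cleanest route is the pointwise estimate $|K_x(y)|^2\le K_x(x)K_y(y)$, which follows from $K_x(y)=\langle K_y,K_x\rangle_{\mathcal H}$ and Cauchy--Schwarz, giving $\|P_I\|_{HS}^2\le\bigl(\int_I K_x(x)(1+x^2)\,dx\bigr)^2$. From the closed form of Theorem~\ref{repker} the diagonal is $K_x(x)=\sqrt2\,\pi\int_0^1 t^{2\pi-1/2}(1-t^2)^{-1/2}e^{-2\pi x^2(1-t)/(1+t)}\,dt$, a continuous function with $K_x(x)\sim\pi/|x|$ as $|x|\to\infty$, so $K_x(x)(1+x^2)$ is continuous and grows only linearly; hence the bounding integral is finite whenever $I$ (and likewise $J$) is bounded, which covers every case needed in the paper.

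I expect the main obstacle to be the passage from bounded $I,J$ to arbitrary sets of finite measure. Unlike the Paley--Wiener case, where the diagonal of the reproducing kernel is constant, here it grows like $|x|$, so the crude Cauchy--Schwarz bound, equivalently the trace $\int_I K_x(x)(1+x^2)\,dx$, can diverge for an unbounded set of finite measure even though the operator remains Hilbert--Schmidt. To reach full generality one must instead exploit the genuine off-diagonal decay of the Mehler-type kernel $K_x(y)$, which from the integral representation decays like a Gaussian in $x-y$ together with a factor of order $(1+|x|+|y|)^{-1}$, and estimate $\iint_{I\times I}|K_x(y)|^2(1+x^2)(1+y^2)\,dx\,dy$ directly rather than through the diagonal; proving that off-diagonal estimate is the one genuinely analytic step.
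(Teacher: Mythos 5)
Your argument is sound where it matters and, for the Hilbert--Schmidt part, follows the same route as the paper: compute $\sum_{n,m}|\langle T_{I,J}e_n,e_m\rangle|^2$ in the Hermite basis of Theorem~\ref{ONB} and collapse the double sum to weighted double integrals of $|K_x(y)|^2$. Your treatment of boundedness and self-adjointness is genuinely different and cleaner: the paper expands $\|T_{I,J}f\|^2$ by brute force and verifies $\langle Tf,g\rangle=\langle f,Tg\rangle$ by hand, whereas your form-theoretic argument via $0\le Q(f,f)\le\|f\|_{\mathcal H}^2$ and Riesz representation yields for free the sharper conclusion $0\le T_{I,J}\le \mathrm{Id}$, i.e. spectrum in $[0,1]$ --- a fact the paper uses later without further comment. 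Splitting $T_{I,J}=P_I+P^J$ and using the triangle inequality for the Hilbert--Schmidt norm is also a legitimate simplification that lets you discard the cross term $\int_I\int_J|\widehat{K_y}(t)|^2(1+y^2)(1+t^2)\,dy\,dt$ which the paper carries along; your reduction of the $J$-block to the $I$-block via $\hat e_n=(-i)^ne_n$ is exactly the paper's observation.

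The one place you go astray is the closing paragraph, and it is worth being precise because the paper's own proof has the same soft spot. You assert that for an unbounded set of finite measure the crude diagonal bound fails ``even though the operator remains Hilbert--Schmidt.'' That last clause is not merely unproved --- it appears to be false, so no off-diagonal estimate can rescue the lemma in the stated generality. Take $J=\emptyset$ and $I=\bigcup_{n\ge 3}[n,n+a_n]$ with $a_n=(n\log^2 n)^{-1}$, so $|I|<\infty$. On each block $[n,n+a_n]^2$ one has $|x^2-y^2|\le 2(n+1)a_n\to 0$, so Lemma~\ref{kerapprox} gives $K_x(y)\ge c/n$ there, while $(1+x^2)(1+y^2)\ge n^4$; hence the block contributes at least $c^2n^2a_n^2=c^2/\log^4n$ to $\iint_{I\times I}|K_x(y)|^2(1+x^2)(1+y^2)\,dx\,dy$, and the sum over $n$ diverges. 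Since that double integral \emph{is} $\|P_I\|_{HS}^2$, the obstruction sits on the diagonal, where there is no decay and the weight is unbounded; the paper's justification (``the kernel is bounded, $I,J$ have finite measure'') overlooks exactly this. So your proof is complete precisely in the regime the paper actually uses --- bounded $I$ and $J$, or more generally sets with $\int_I(1+|x|)\,dx<\infty$, which is what your Cauchy--Schwarz bound $|K_x(y)|^2\le K_x(x)K_y(y)$ together with $K_x(x)(1+x^2)\sim\pi|x|$ genuinely requires --- and the honest fix is to restate the lemma under that hypothesis rather than to hunt for a stronger off-diagonal estimate.
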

\begin{proof}
    First, we show that $T_{I,J}$ is bounded. By definition
    \begin{align*}
     ||T_{I,J}f||^2 =&\int_{\mathbb{R}} \left|\int_{I} f(t)\overline{K_x(t)}(1+t^2)dt + \int_{J} \widehat{f}(t)\overline{\widehat{K_x}(t)}(1+t^2)dt\right|^2(1+x^2) dx \\
    + &\int_{\mathbb{R}} \left|\int_{I} f(t)\widehat{K_x}(t)(1+t^2)dt + \int_{J} \widehat{f}(t)\overline{K_x(t)}(1+t^2)dt\right|^2(1+x^2) dx.
\end{align*}
    Using $|z|^2 = z\overline{z}$, multiplying out and using the reproducing property of the kernel we obtain
\begin{align*}
    ||T_{I,J}f||^2  &=\int_I \int_I f(t)\overline{f(y)}K_y(t) (1+y^2)(1+t^2) dydt +\int_J \int_J \widehat{f}(t) \overline{\widehat{f}(y)} K_y(t) (1+y^2)(1+t^2) dydt\\
    &+ \int_I \int_J f(t)\overline{\widehat{f}(y)} \widehat{K_y}(t)(1+y^2)(1+t^2)dydt + \int_J \int_I \widehat{f}(t) \overline{{f}(y) \widehat{K_y}(t)}(1+y^2)(1+t^2)dydt.
\end{align*}
Since the kernel and its Fourier transform are bounded, the above is bounded and
$$||T_{I,J}f||\leq c\max{(|I|, |J|)}^3 ||f||.$$
    To show that $T_{I,J}$ is Hilbert--Schmidt, we compute $\sum_n ||T_{I,J}e_n||^2$, where $\{e_n\}_n$ is the orthonormal basis of $\mathcal{H}$ from Theorem \ref{ONB}. Doing the same as above with the additional facts that $\innerproduct{\widehat{K_y}}{\widehat{K_t}}=\innerproduct{K_y}{K_t}=K_y(t)$, $\widehat{e_n}(t) \overline{\widehat{e_n}(y)} = (-i)^n e_n(t) \overline{(-i)^n}e_n(y) = e_n(t)e_n(y)$, $\sum_n e_n(t)e_n(y) = K_y(t)$ and $\sum_n e_n(t)\widehat{e_n}(y)=\widehat{K_y}(t)$ gives
\begin{align*}
    \sum_n ||T_{I,J}e_n||^2&= \int_I \int_I K_y(t)^2 (1+y^2)(1+t^2) dydt + \int_J \int_J K_y(t)^2 (1+y^2)(1+t^2) dydt \\
    &+ 2\int_I \int_J |\widehat{K_y}(t)|^2(1+y^2)(1+t^2)dydt.
\end{align*}
    As $I, J$ have finite Lebesgue measures, the above is finite, and so $T_{I,J}$ is a Hilbert--Schmidt operator.
    To prove that $T_{I,J}$ is self-adjoint, one simply writes down $\innerproduct{Tf}{g}$ and $\innerproduct{f}{Tg}$, and notices that they are equal using the reproducing property of the kernel.
\end{proof}
This lemma allows us to study the eigenvalues of $T_{I,J}$.
\begin{theorem}\label{sumeig}
Let $\lambda_n$ be the $n$th eigenvalue of $T_{I,J}$. Then
\begin{align*}
    \sum_{n} \lambda_n &= \int_I K_x(x)(1+x^2)dx + \int_J K_x(x)(1+x^2)dx,\\
    \sum_{n} \lambda_n^2 &= \int_I \int_I K_y(t)^2 (1+y^2)(1+t^2) dydt\\
    &+\int_J \int_J K_y(t)^2 (1+y^2)(1+t^2) dydt \\
    &+ 2\int_I \int_J |\widehat{K_y}(t)|^2(1+y^2)(1+t^2)dydt.
\end{align*}
\end{theorem}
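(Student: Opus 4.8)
The plan is to exploit that $T_{I,J}$ is a \emph{positive} self-adjoint Hilbert--Schmidt operator: self-adjointness and the Hilbert--Schmidt property come from the preceding lemma, and positivity is immediate from $\innerproduct{T_{I,J}f}{f}_{\mathcal{H}} = \int_I |f(x)|^2(1+x^2)dx + \int_J |\hat f(x)|^2(1+x^2)dx \geq 0$. Hence the eigenvalues $\lambda_n$ are nonnegative, and with respect to the orthonormal basis $\{e_n\}$ from Theorem \ref{ONB} spectral theory gives $\sum_n \lambda_n = \sum_n \innerproduct{T_{I,J}e_n}{e_n}_{\mathcal{H}}$ (trace of a positive operator, basis-independent) and $\sum_n \lambda_n^2 = \sum_n \|T_{I,J}e_n\|^2$ (the Hilbert--Schmidt norm). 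The second identity already \emph{is} the asserted formula for $\sum_n \lambda_n^2$, since $\sum_n \|T_{I,J}e_n\|^2$ was computed explicitly in the proof of the preceding lemma. So only the trace remains.

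To compute $\sum_n \innerproduct{T_{I,J}e_n}{e_n}_{\mathcal{H}}$ I would split $T_{I,J}=P_I+Q_J$ into its time and frequency parts, $P_If(t)=\int_I f(x)\overline{K_t(x)}(1+x^2)dx$ and $Q_Jf(t)=\int_J \hat f(x)\overline{\widehat{K_t}(x)}(1+x^2)dx$, and treat each trace by writing $P_Ie_n$ and $Q_Je_n$ as superpositions of reproducing kernels in $\mathcal{H}$ and applying the reproducing property. For the time part, using that $K$ is real and symmetric (so $\overline{K_t(x)}=K_x(t)$) one has $P_Ie_n=\int_I e_n(x)(1+x^2)K_x\,dx$, whence $\innerproduct{P_Ie_n}{e_n}_{\mathcal{H}}=\int_I e_n(x)\overline{e_n(x)}(1+x^2)dx=\int_I |e_n(x)|^2(1+x^2)dx$.

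The frequency part is the step needing the most care. The key observation is that $\mathcal{F}^{-1}K_x$ is the element of $\mathcal{H}$ reproducing the frequency evaluation $f\mapsto \hat f(x)$; concretely $(\mathcal{F}^{-1}K_x)(t)=\overline{\widehat{K_t}(x)}$, which rests on the real-valuedness of $K_x$ together with the Fourier-symmetry $\widehat{K_x}(t)=\widehat{K_t}(x)$ read off from Theorem \ref{ftrepker}. This lets me write $Q_Je_n=\int_J \hat e_n(x)(1+x^2)\,\mathcal{F}^{-1}K_x\,dx$ as an $\mathcal{H}$-valued integral, and then, since $\mathcal{F}$ is unitary on $\mathcal{H}$, the reproducing property gives $\innerproduct{\mathcal{F}^{-1}K_x}{e_n}_{\mathcal{H}}=\innerproduct{K_x}{\hat e_n}_{\mathcal{H}}=\overline{\hat e_n(x)}$, so $\innerproduct{Q_Je_n}{e_n}_{\mathcal{H}}=\int_J |\hat e_n(x)|^2(1+x^2)dx$. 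The main obstacle is precisely justifying that $\mathcal{F}^{-1}K_x$ reproduces $\hat f(x)$ and that $Q_Je_n$ may legitimately be pulled inside the integral so the reproducing property applies there; once the Fourier-symmetry of the kernel and the unitarity of $\mathcal{F}$ on $\mathcal{H}$ are in hand, this is routine.

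Finally I would sum both contributions over $n$, interchanging sum and integral by Tonelli (all terms nonnegative), and use the pointwise diagonal identities $\sum_n |e_n(x)|^2=\sum_n |\hat e_n(x)|^2=K_x(x)$ — the first being the diagonal $t=y=x$ of $\sum_n e_n(t)e_n(y)=K_y(t)$ from the preceding proof, and the second following from $\hat e_n=(-i)^n e_n$. This yields $\sum_n \innerproduct{T_{I,J}e_n}{e_n}_{\mathcal{H}}=\int_I K_x(x)(1+x^2)dx+\int_J K_x(x)(1+x^2)dx$, which is finite because $K$ is bounded and $I,J$ have finite measure (so $T_{I,J}$ is in fact trace class), completing the proof.
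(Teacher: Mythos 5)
Your proposal is correct and follows essentially the same route as the paper: the $\sum_n\lambda_n^2$ identity is read off from the basis-independence of the Hilbert--Schmidt norm together with the computation of $\sum_n\|T_{I,J}e_n\|^2$ in the preceding lemma, and the trace is evaluated in the Hermite basis by splitting off the time and frequency parts and applying the reproducing property, then summing the diagonal via $\sum_n|e_n(x)|^2=\sum_n|\widehat{e_n}(x)|^2=K_x(x)$. Your added justifications (positivity of $T_{I,J}$, the identification of $\mathcal{F}^{-1}K_x$ as the reproducer of $f\mapsto\hat f(x)$, and the Tonelli interchange) only make explicit steps the paper leaves implicit.
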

\begin{proof}
    By the previous lemma, $T_{I,J}$ is a compact self-adjoint operator, and thus admits an orthonormal basis $\{f_n\}_n$ of $\mathcal{H}$ such that $T_{I,J}f_n = \lambda_n f_n$. As $T_{I,J}$ is also Hilbert--Schmidt, $\sum_n ||T_{I,J}e_n||^2 = \sum_n ||T_{I,J}f_n||^2$ for any two orthonormal bases $\{e_n\}_n, \{f_n\}_n$, and so the second formula follows from the proof of the previous lemma as
\begin{align*}
    \sum_n \lambda_n^2 &= \sum_n \lambda_n^2 ||f_n||^2 = \sum_n ||T_{I,J}f_n||^2 = \sum_n ||T_{I,J}e_n||^2.
\end{align*}
For the first formula, we simply compute that 
\begin{align*}
    \lambda_n = \innerproduct{T_{I,J}e_n}{e_n} &= \int_I \innerproduct{K_x}{e_n} e_n(x)(1+x^2)dx+\int_J \innerproduct{K_x}{\widehat{e_n}}  \widehat{e_n}(x)(1+x^2)dx\\
    &= \int_I \overline{e_n}(x) e_n(x)(1+x^2)dx+\int_J \overline{\widehat{e_n}(x)} \widehat{e_n}(x)(1+x^2)dx,
\end{align*}
and thus 
$$\sum_n \lambda_n = \int_I K_x(x)(1+x^2)dx + \int_J K_x(x)(1+x^2)dx .$$
\end{proof} 
To get some information now about the size of both sums, we need to obtain an approximate formula for the reproducing kernel, as the original formula of Theorem \ref{repker} is difficult to use in practice. Since the kernel will clearly be very small if $y \notin (x-1,x+1)$, it is enough for us to check the kernel's behavior around the diagonal. 
\begin{lemma}\label{kerapprox}
For $y\in(x-1,x+1)$ we have
    $$K_x(y) \sim \frac{\pi}{x} e^{-\pi |x^2-y^2|}, \quad x\to \infty.$$
\end{lemma}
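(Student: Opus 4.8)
The plan is to extract the asymptotics directly from the closed formula of Theorem~\ref{repker} by a Laplace-type analysis, after a change of variables that makes the leading computation exact. First I would absorb the Gaussian prefactor $e^{-\pi(x^2+y^2)}$ into the exponent under the integral. Writing $\delta=y-x$ and using the identity $(x^2+y^2)(1+t^2)-4xyt=2xy(1-t)^2+\delta^2(1+t^2)$, a direct computation gives
\[
-\pi(x^2+y^2)+2\pi\frac{2xyt-(x^2+y^2)t^2}{1-t^2}=-2\pi xy\,\frac{1-t}{1+t}-\pi\delta^2\,\frac{1+t^2}{1-t^2},
\]
so that
\[
K_x(y)=\sqrt2\,\pi\int_0^1\frac{t^{2\pi-1/2}}{\sqrt{1-t^2}}\,e^{-2\pi xy\frac{1-t}{1+t}-\pi\delta^2\frac{1+t^2}{1-t^2}}\,dt.
\]
The first term in the exponent pushes the mass of the integrand toward $t=1$ as $x\to\infty$, while for $\delta\neq0$ the second term forbids $t$ from reaching $1$; the balance of these effects will produce the stated exponential.

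Next I would substitute $v=\frac{1-t}{1+t}$ (so $t=\frac{1-v}{1+v}$, with $v$ running from $1$ down to $0$), under which $\frac{1-t}{1+t}=v$, $\frac{1+t^2}{1-t^2}=\frac{1+v^2}{2v}$, $\sqrt{1-t^2}=\frac{2\sqrt v}{1+v}$ and $dt=-\frac{2}{(1+v)^2}\,dv$. This turns the integral into
\[
K_x(y)=\sqrt2\,\pi\int_0^1\Bigl(\tfrac{1-v}{1+v}\Bigr)^{2\pi-1/2}\frac{1}{\sqrt v\,(1+v)}\,e^{-2\pi xy\,v-\frac{\pi\delta^2(1+v^2)}{2v}}\,dv.
\]
The factor $e^{-2\pi xy v}$ concentrates the integrand near $v=0$ as $x\to\infty$, where the slowly varying factors satisfy $\bigl(\tfrac{1-v}{1+v}\bigr)^{2\pi-1/2}\frac{1}{1+v}\to1$ and $\frac{1+v^2}{2v}=\frac1{2v}+O(v)$. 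Replacing these by their limiting values and extending the range to $(0,\infty)$ (the tail $\int_1^\infty$ being negligible), the problem reduces to the model integral $\int_0^\infty v^{-1/2}e^{-av-b/v}\,dv$ with $a=2\pi xy$ and $b=\frac{\pi\delta^2}{2}$.

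This model integral is elementary (it equals $2(b/a)^{1/4}K_{1/2}(2\sqrt{ab})$, and $K_{1/2}$ is a closed-form exponential), giving
\[
\int_0^\infty v^{-1/2}e^{-av-b/v}\,dv=\sqrt{\tfrac\pi a}\,e^{-2\sqrt{ab}}=\frac{1}{\sqrt{2xy}}\,e^{-2\pi|\delta|\sqrt{xy}}.
\]
Hence $K_x(y)\sim\frac{\pi}{\sqrt{xy}}e^{-2\pi|\delta|\sqrt{xy}}$. To finish I would use $|\delta|<1$ and $x\to\infty$ to replace $\sqrt{xy}$ by $x$ in the prefactor, and to check that $2\pi|\delta|\sqrt{xy}=\pi|\delta|(x+y)+o(1)=\pi|x^2-y^2|+o(1)$, the error being $O(|\delta|^3/x)=o(1)$; this yields $K_x(y)\sim\frac\pi x e^{-\pi|x^2-y^2|}$.

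The step requiring the most care is the passage to the model integral: I must show that replacing the slowly varying prefactor by $1$, dropping the $O(v)$ part of $\frac{1+v^2}{2v}$, and extending the upper limit to infinity each contribute only a multiplicative $1+o(1)$ factor, and crucially that this holds \emph{uniformly} for all $y\in(x-1,x+1)$. The delicate regime is $\delta\to0$, where the effective saddle $v_*\sim|\delta|/\sqrt{xy}$ collapses into the endpoint $v=0$; this is exactly why I evaluate the model integral in closed form rather than applying a fixed-saddle Laplace expansion, since the exact formula degenerates smoothly to the correct value $\frac{1}{\sqrt{2xy}}$ at $\delta=0$ and therefore covers the entire diagonal strip in one stroke.
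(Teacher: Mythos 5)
Your proposal is correct, and it reaches the stated asymptotics by a genuinely different route from the paper. The paper starts from the same rewriting of the exponent (its displayed formula $-2\pi xy\frac{1-t}{1+t}-2\pi\frac{(y-x)^2}{1-t^2}$, combined with the prefactor $e^{\pi(x-y)^2}$, agrees with your $-2\pi xy\frac{1-t}{1+t}-\pi\delta^2\frac{1+t^2}{1-t^2}$), but then performs a Laplace expansion about the interior critical point $t=x/y$ of the exponent, approximating the integral by a Gaussian of width $\sqrt{(y^2-x^2)/(2y^4)}$. That saddle collapses into the endpoint $t=1$ as $\delta=y-x\to0$, so the paper's fixed-saddle expansion is formally delicate precisely in the regime you single out; the paper does not address this uniformity issue. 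Your substitution $v=\frac{1-t}{1+t}$ instead reduces the problem to the exactly evaluable integral $\int_0^\infty v^{-1/2}e^{-av-b/v}\,dv=\sqrt{\pi/a}\,e^{-2\sqrt{ab}}$ with $a=2\pi xy$, $b=\pi\delta^2/2$, which degenerates smoothly at $\delta=0$ and hence gives the asymptotics uniformly over the whole strip $|y-x|<1$ in one computation; your final step replacing $2|\delta|\sqrt{xy}$ by $|\delta|(x+y)=|x^2-y^2|$ with error $O(|\delta|^3/x)=o(1)$ is exactly the right bookkeeping. What your approach buys is this uniformity (which matters, since the lemma is later integrated over the full strip near the diagonal, including $\delta$ arbitrarily small); what it costs is the need to justify carefully, as you note, that truncating the prefactors and extending the range contribute only $1+o(1)$ uniformly in $\delta$ — a point you have identified but would still need to write out to match the (admittedly informal) standard of the paper's own argument.
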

\begin{proof}
    First, we know from Theorem \ref{repker} that
    \begin{align*}
        K_x(y) = \sqrt{2} \pi e^{\pi(x-y)^2} \int_0^1 \frac{t^{2\pi-1/2}}{\sqrt{1-t^2}} e^{-2\pi xy \frac{1-t}{1+t}-2\pi\frac{(y-x)^2}{1-t^2}} dt.
    \end{align*}
    For simplicity assume that $y\in (x,x+1)$ (the other case is analogous).
    Since the maximum of $- xy \frac{1-t}{1+t}-\frac{(y-x)^2}{1-t^2}$ is attained at $t = \frac{x}{y}$, we take the Taylor expansions around $t=\frac{x}{y}$ and get 
    \begin{align*}
        K_x(y) &\sim \sqrt{2} \pi e^{\pi(x-y)^2} \int_0^1 \frac{\left(\frac{x}{y}\right)^{2\pi-1/2}}{\sqrt{1-\left(\frac{x}{y}\right)^2}} e^{2\pi y(x-y)-2\pi \frac{y^4}{y^2-x^2}(t-\frac{x}{y})^2} dt\\
        &\sim \frac{\pi}{\sqrt{1-\frac{x}{y}}} e^{\pi (x^2-y^2)} \int_0^1  e^{-2\pi \frac{y^4}{y^2-x^2}(t-\frac{x}{y})^2} dt\\
        &\sim \frac{\pi}{\sqrt{1-\frac{x}{y}}} e^{\pi (x^2-y^2)} \sqrt{\frac{y^2-x^2}{2y^4}} \sim \frac{\pi}{x} e^{\pi (x^2-y^2)}.
    \end{align*}
\end{proof}
\begin{lemma}
We have
\begin{align*}
     \widehat{K_x}(y) \sim \frac{e^{-2\pi ixy}}{x^2+y^2}, \quad x^2+y^2\to \infty.
\end{align*}
\end{lemma}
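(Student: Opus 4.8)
The plan is to apply Laplace's method (in the guise of Watson's lemma) to the integral representation of $\widehat{K_x}(y)$ supplied by Theorem~\ref{ftrepker}. First I would absorb the prefactor $e^{-\pi(x^2+y^2)}$ into the integrand and record the total exponent, whose real part equals $\pi(x^2+y^2)\frac{t^2-1}{1+t^2}$ and whose imaginary part equals $-4\pi xy\frac{t}{1+t^2}$. On $[0,1]$ the real part is nonpositive and vanishes only at the endpoint $t=1$, so as $x^2+y^2\to\infty$ the whole integral concentrates in a shrinking neighborhood of $t=1$.

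Next I would substitute $t=1-u$ and expand the three factors near $u=0$. The prefactor $\frac{t^{2\pi-1/2}}{\sqrt{1+t^2}}$ tends to $\frac{1}{\sqrt{2}}$; the real part of the exponent satisfies $\pi(x^2+y^2)\frac{t^2-1}{1+t^2} = -\pi(x^2+y^2)\,u\,(1+O(u))$, which fixes the decay rate; and, crucially, the map $t\mapsto \frac{t}{1+t^2}$ has a critical point at $t=1$ (its derivative $\frac{1-t^2}{(1+t^2)^2}$ vanishes there), so the imaginary part is stationary and equals $-2\pi xy+O(u^2)$. Thus near $t=1$ the integrand behaves like $\frac{1}{\sqrt{2}}e^{-2\pi i xy}e^{-\pi(x^2+y^2)u}$, and Watson's lemma yields
$$\int_0^1 (\cdots)\,dt \sim \frac{1}{\sqrt{2}}\,e^{-2\pi i xy}\int_0^\infty e^{-\pi(x^2+y^2)u}\,du = \frac{e^{-2\pi i xy}}{\sqrt{2}\,\pi(x^2+y^2)}.$$
Multiplying by $\sqrt{2}\,\pi$ then gives $\widehat{K_x}(y)\sim \frac{e^{-2\pi i xy}}{x^2+y^2}$, as claimed.

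The main obstacle is to make this uniform as $x^2+y^2\to\infty$ in every direction, not merely along a fixed ray, since $x$ and $y$ vary independently. The relevant length scale is $u\sim 1/(x^2+y^2)$, and on this scale I must verify that the neglected corrections are genuinely $o(1)$. For the imaginary part the quadratic term has size $|xy|\,u^2$, which by the elementary bound $|xy|\le\tfrac{1}{2}(x^2+y^2)$ is at most $O\big(1/(x^2+y^2)\big)\to 0$; the same bound controls the higher-order correction to the real part and the variation of the smooth prefactor over the scale $u$. Finally I would estimate the contribution of the region bounded away from $t=1$, where the real part of the exponent is $\le -c\,(x^2+y^2)$ for some $c>0$, so that this tail is exponentially small compared with the main term of order $1/(x^2+y^2)$. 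Assembling these estimates gives the stated asymptotic equivalence.
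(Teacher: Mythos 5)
Your proposal is correct and follows essentially the same route as the paper: the paper likewise observes that the integrand of the representation from Theorem~\ref{ftrepker} concentrates at $t=1$ and Taylor-expands there, replacing the prefactor by $1/\sqrt{2}$, the phase by $-2\pi xy$, and the real exponent by $\pi(x^2+y^2)(t-1)$ before integrating. Your write-up is in fact more careful than the paper's, since you explicitly note the stationarity of $t\mapsto t/(1+t^2)$ at $t=1$ and use $|xy|\le\tfrac{1}{2}(x^2+y^2)$ to make the expansion uniform in the direction of $(x,y)$, details the paper leaves implicit.
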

\begin{proof}
Theorem \ref{ftrepker} gives us that
\begin{align*}
    \widehat{K_x}(y) &= \sqrt{2} \pi \int_0^1 \frac{t^{2\pi-1/2}}{\sqrt{1+t^2}} e^{\pi \frac{-4xyti+(x^2+y^2)(t^2-1)}{1+t^2}} dt. 
\end{align*}
We notice that the mass of the integrand is concentrated around $t=1$. By using suitable Taylor expansions around $t=1$, we therefore obtain
\begin{align*}
     \widehat{K_x}(y) &\sim \sqrt{2} \pi \int_0^1 \frac{1}{\sqrt{2}} e^{-2\pi ixy} e^{\pi (x^2+y^2)(t-1)} dt \\
    &\sim  \frac{e^{-2\pi ixy}}{x^2+y^2}.
\end{align*}
\end{proof}
This now allows us to study the eigenvalues of $T_{I,J}$.
\begin{theorem}
    For $I=[-R_1,R_1]$, $J=[-R_2,R_2]$ we have the following for the trace of $T_{I,J}$:
    $$\sum_n \lambda_n \sim \pi R_1^2+\pi R_2^2, \quad R_1,R_2 \to \infty.$$
\end{theorem}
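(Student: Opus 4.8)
The plan is to reduce the computation of the trace to a single diagonal integral of the reproducing kernel, and then to evaluate that integral using the near-diagonal asymptotics already established in Lemma \ref{kerapprox}. By Theorem \ref{sumeig}, the trace of $T_{I,J}$ equals
$$\sum_n \lambda_n = \int_I K_x(x)(1+x^2)\,dx + \int_J K_x(x)(1+x^2)\,dx,$$
so for $I=[-R_1,R_1]$ and $J=[-R_2,R_2]$ it suffices to prove that $\int_{-R}^R K_x(x)(1+x^2)\,dx \sim \pi R^2$ as $R \to \infty$ and then apply this with $R = R_1$ and $R = R_2$.

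First I would record the diagonal asymptotics. Setting $y = x$ in Lemma \ref{kerapprox} (which is permitted since $x \in (x-1,x+1)$), the factor $e^{-\pi|x^2-y^2|}$ becomes $1$, so $K_x(x) \sim \pi/x$ as $x \to \infty$. Since the formula of Theorem \ref{repker} is invariant under $(x,y) \mapsto (-x,-y)$, the diagonal value $K_x(x)$ is even in $x$, giving $K_x(x) \sim \pi/|x|$ as $|x| \to \infty$ and reducing the problem to $\int_{-R}^R K_x(x)(1+x^2)\,dx = 2\int_0^R K_x(x)(1+x^2)\,dx$.

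Next I would carry out a squeeze argument to convert the pointwise asymptotic into the integral asymptotic. Fix $\delta > 0$ and choose $M$ so large that $(1-\delta)\pi/x \le K_x(x) \le (1+\delta)\pi/x$ for all $x > M$. Splitting $\int_0^R = \int_0^M + \int_M^R$, the first integral is bounded by a constant depending only on $M$ (as $K_x(x) = \|K_x\|^2$ is continuous and the weight is bounded on $[0,M]$), hence $O(1)$ and negligible. On the remaining range one computes $\int_M^R \frac{\pi}{x}(1+x^2)\,dx = \pi\log(R/M) + \frac{\pi}{2}(R^2-M^2)$, whose leading term is $\frac{\pi}{2}R^2$ while the logarithmic part is $o(R^2)$. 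Sandwiching $\int_M^R K_x(x)(1+x^2)\,dx$ between $(1\pm\delta)$ times this quantity and letting first $R \to \infty$ and then $\delta \to 0$ yields $\int_0^R K_x(x)(1+x^2)\,dx \sim \frac{\pi}{2}R^2$, so that $\int_{-R}^R K_x(x)(1+x^2)\,dx \sim \pi R^2$. Summing the contributions from $I$ and $J$ gives $\sum_n \lambda_n \sim \pi R_1^2 + \pi R_2^2$, consistent with the value $2\pi R^2$ claimed for $T_{I,I}$ in the introduction.

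I expect the only genuine subtlety to be the uniformity needed in the squeeze step: Lemma \ref{kerapprox} is a pointwise statement as $x \to \infty$, and one must ensure that the error in $K_x(x) = (1+o(1))\pi/x$ is controlled uniformly enough that, after multiplication by the growing weight $1+x^2$ and integration up to $R$, it still contributes only $o(R^2)$. Because the dominant mass of the integral concentrates near $x = R$ (the integrand grows like $\pi x$), the contributions from the fixed initial segment $[0,M]$ and from the subleading $\int 1/x$ term are both harmless, so the argument goes through and the remaining bookkeeping is routine.
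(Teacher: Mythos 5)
Your proposal is correct and follows the same route as the paper: reduce to the diagonal integral via Theorem \ref{sumeig}, invoke $K_x(x)\sim \pi/x$ from Lemma \ref{kerapprox}, and integrate against the weight $1+x^2$ to get $\pi R^2$ from each interval. The paper states this in one line ("the conclusion follows trivially"), while you supply the symmetry observation and the squeeze argument that make the asymptotic integration rigorous; no substantive difference in method.
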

\begin{proof}
    By Lemma \ref{kerapprox}, $K_x(x) \sim \frac{\pi}{x}$. Thus, the conclusion follows trivially from Theorem \ref{sumeig}.
\end{proof}
\begin{theorem} \label{sumsq}
For $I=[-R_1,R_1]$, $J=[-R_2,R_2]$ we have the following for the sum of squares of eigenvalues of $T_{I,J}$ as $R_1,R_2 \to \infty$:
    \begin{align*}
        \sum_n \lambda_n^2 \sim &\frac{\pi}{2} R_1^2 + \frac{\pi}{2} R_2^2 +2((R_1^2 + R_2^2) \arctan(R_1/R_2) - R_1 R_2  \\
        &- R_1^2 \arctan(R_1) + 2 R_1^2 \arctan(R_2/R_1)).
    \end{align*}
\end{theorem}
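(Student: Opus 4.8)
The starting point is Theorem \ref{sumeig}, which already reduces $\sum_n \lambda_n^2$ to the three explicit integrals
$$\int_I\int_I K_y(t)^2(1+y^2)(1+t^2)\,dy\,dt, \qquad \int_J\int_J K_y(t)^2(1+y^2)(1+t^2)\,dy\,dt,$$
and $2\int_I\int_J |\widehat{K_y}(t)|^2(1+y^2)(1+t^2)\,dy\,dt$. The plan is to evaluate each of these asymptotically, the first two using the on-diagonal estimate $K_y(t)\sim \frac{\pi}{y}e^{-\pi|y^2-t^2|}$ from Lemma \ref{kerapprox}, and the cross term using the Fourier-side estimate $\widehat{K_y}(t)\sim e^{-2\pi i yt}/(y^2+t^2)$ established just before, and then to add the results.

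For the two diagonal integrals I would square the estimate of Lemma \ref{kerapprox} to get $K_y(t)^2\sim \frac{\pi^2}{y^2}e^{-2\pi|y^2-t^2|}$, which, as a function of $t$, is concentrated in a window of width $O(1/y)$ about $t=y$ (the mirror window at $t=-y$ contributes nothing, since the kernel decays off the positive diagonal). Substituting $t=y+s$ and using $|y^2-t^2|\approx 2y|s|$ turns the inner integral into the Laplace integral $\int e^{-4\pi y|s|}\,ds=\tfrac{1}{2\pi y}$, while $(1+y^2)(1+t^2)\sim y^4$ over the window; the inner integral therefore contributes $\sim \tfrac{\pi|y|}{2}$. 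Integrating over $y\in[-R_1,R_1]$ gives $\tfrac{\pi}{2}R_1^2$ (with an $O(\log R_1)$ correction from the full factor $\tfrac{\pi(1+y^2)^2}{2y^3}$), and the $J$-integral gives $\tfrac{\pi}{2}R_2^2$ identically.

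For the cross term I would insert $|\widehat{K_y}(t)|^2\sim(y^2+t^2)^{-2}$, so that in the bulk the integrand becomes
$$\frac{(1+y^2)(1+t^2)}{(y^2+t^2)^2}=\frac{y^2t^2}{(y^2+t^2)^2}+\frac{1}{y^2+t^2}+\frac{1}{(y^2+t^2)^2}.$$
The last two summands integrate over the box to $O(\log\max(R_1,R_2))$ and $O(1)$, and a fixed neighbourhood of the origin (where the asymptotic for $\widehat{K_y}$ fails but the true kernel is bounded) also contributes only $O(1)$; all of these are lower order than $R_1R_2$. Hence the leading behaviour of the cross term is $2\int_{-R_1}^{R_1}\int_{-R_2}^{R_2}\frac{y^2t^2}{(y^2+t^2)^2}\,dy\,dt$. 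This is elementary: writing $\frac{t^2}{(y^2+t^2)^2}=\frac{1}{y^2+t^2}-\frac{y^2}{(y^2+t^2)^2}$ and using the standard antiderivatives of $1/(y^2+t^2)$ and $1/(y^2+t^2)^2$ reduces it, after one integration by parts, to a combination of $\arctan(R_1/R_2)$, $\arctan(R_2/R_1)$ and $R_1R_2$. Adding the three pieces and rewriting with $\arctan(R_2/R_1)=\tfrac{\pi}{2}-\arctan(R_1/R_2)$ and $\arctan(R_1)\to\tfrac{\pi}{2}$ then yields the stated expression (whose asymmetric appearance is reconciled with the $R_1\leftrightarrow R_2$ symmetry of the problem precisely through these identities).

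The main obstacle is the rigorous control of the errors, i.e.\ upgrading the pointwise ``$\sim$'' replacements to estimates on the whole integrals with error $o(R_1^2+R_2^2)$. For the diagonal integrals this means quantifying the concentration of $K_y(t)^2$ near $t=y$ uniformly in $y$ and handling the small-$|y|$ regime where Lemma \ref{kerapprox} does not apply; for the cross term it means making the split into a bounded near-origin region and a far region precise and verifying the logarithmic bound on $\int\int(y^2+t^2)^{-1}$. Once these error bounds are in place, the remaining work is the routine evaluation of the explicit one-variable integrals described above.
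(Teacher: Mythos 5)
Your proposal follows essentially the same route as the paper's proof: reduce to the three integrals of Theorem \ref{sumeig}, evaluate the two diagonal integrals via the on-diagonal asymptotic of Lemma \ref{kerapprox} (exploiting concentration in a window of width $O(1/y)$ about $t=y$), and evaluate the cross term by inserting $|\widehat{K_y}(t)|^2\sim (y^2+t^2)^{-2}$ and integrating the resulting rational function with explicit antiderivatives. The only differences are cosmetic --- a $t=y+s$ substitution versus the paper's direct $\int_t^{t+1}$ parametrization, and a slightly different splitting of the cross-term integrand --- and your closing caveat about rigorous error control applies equally to the paper's own argument.
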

\begin{proof}
First, 
\begin{align*}
    &\int_I \int_I K_y(t)^2 (1+y^2)(1+t^2) dydt \sim 4\int_1^{R_1} \int_{t}^{t+1} K_y(t)^2 (1+y^2)(1+t^2) dydt \\ 
    &\sim 4\int_1^{R_1} \int_{t}^{t+1} \frac{\pi^2}{yt} e^{2\pi (t^2-y^2)} (1+y^2)(1+t^2) dydt 
    \sim \pi \int_1^{R_1} t (1-e^{-2\pi(2t+1)})dt \sim \frac{\pi}{2} R_1^2. 
\end{align*}
Second, using $\int \frac{t^2}{(y^2+t^2)^2}dt = \frac{1}{2} \left(\frac{\arctan (\frac{t}{y})}{y} -\frac{t}{t^2+y^2}\right)$, $\int y \arctan(\frac{R}{y})dy = \frac{1}{2}((R^2+y^2)\arctan(\frac{R}{y})+Ry)$ and $\int\frac{Ry^2}{R^2+y^2}dy = R(y-R\arctan(\frac{y}{R})))$, we have
\begin{align*}
    &\int_I \int_J |\widehat{K_y}(t)|^2(1+y^2)(1+t^2)dydt \sim 4 \int_1^{R_1} \int_1^{R_2} \frac{1}{(y^2+t^2)^2}(1+y^2)(1+t^2)dydt\\
    &\sim (R_1^2 + R_2^2) \arctan(R_1/R_2) - R_1 R_2 + R_1 + R_2 - R_1^2 \arctan(R_1) \\
    &- R_2^2 \arctan(1/R_2) + 2 R_1^2 \arctan(R_2/R_1) - 2 R_1^2 \arctan(1/R_1).
\end{align*}
Thus, the conclusion follows from Theorem \ref{sumeig}.
\end{proof}
As we can see now, if $I=J=[-R,R]$, then the above simplifies to 
\begin{align*}
    \sum_n \lambda_n &\sim 2\pi R^2, \\
    \sum_n \lambda_n^2 &\sim  (2\pi-2)R^2.
\end{align*}
This means that the eigenvalues do not decay as quickly as we initially expected them to do. It seems like concentrating the sum of both the function and its Fourier transform is the reason for that. We do not think it has anything to do with the weight in the norm - as we can see below, in the case of a weighted Paley--Wiener space, the general behavior of eigenvalues did not change.
This leaves us with the question of how many of the eigenvalues are close to $1$, partially answered in Theorem \ref{4R^2th}.

To prove this theorem, we will consider a subspace of $\mathcal{H}$, the weighted Paley--Wiener space. This space is easier to study and will already give us the desired number of large eigenvalues. Thus, we divert our focus to this space for now, before coming back to the proof of Theorem \ref{4R^2th}. 

\section{Eigenvalue problem for the Paley--Wiener space with $(1+x^2)$ weight}\label{PW}
Consider the Paley--Wiener space $PW_{T}(1+x^2)$ of functions $f$ with Fourier transform supported on $[-T, T]$ and with the norm
$$||f||^2 = \int_{\mathbb{R}} |f(x)|^2 (1+x^2)dx <\infty.$$
Note that it is a subspace of our space $\mathcal{H}$.
It is a reproducing kernel Hilbert space, and to find the kernel $L_t(y)$ we want to solve
\begin{align*}
    f(t) = \int_{\mathbb{R}} f(y) \overline{L_t(y)} (1+y^2)dy.   
\end{align*}
Using Plancherel's theorem we obtain the differential equation
\begin{align*}
    \widehat{L_t}(y)-\frac{\widehat{L_t}^{''}(y)}{4\pi^2}=e^{-2\pi i yt}  \mathbbm{1}_{[-T,T]}(y), \quad \widehat{L_t}(-T)=\widehat{L_t}(T)=0,
\end{align*}
and solving it we get 
\begin{align*}
    \widehat{L_t}(y) = \frac{e^{2\pi T}(e^{-2\pi i tT}e^{4\pi T}-e^{2\pi itT})(e^{2\pi y}-e^{2\pi(2T-y)})}{(1 - e^{8\pi T})(1 + t^2)} + \frac{e^{-2\pi i ty}-e^{-2\pi i tT}e^{2\pi(T-y)}}{1+t^2},
\end{align*}
which in turn gives
\begin{align*}
    L_t(y) 
    = \frac{(1+ty)\sin(2\pi T(t-y))}{\pi (t-y) (1+t^2)(1+y^2)}+\frac{(1+e^{8\pi T})\cos(2\pi T(t-y))-2e^{4\pi T}\cos(2\pi T(t+y))}{\pi (1-e^{8\pi T})(1+t^2)(1+y^2)}.
\end{align*}
If we consider now a problem of maximizing the ratio
\begin{align*}
    \lambda = \frac{\int_{-R}^R |f(x)|^2 (1+x^2)dx}{\int_{\mathbb{R}} |f(x)|^2 (1+x^2)dx},
\end{align*}
we will get the corresponding compact trace class operator 
\begin{align*}
    W_Rf(t) = \int_{-R}^R f(y)\overline{L_t(y)}(1+y^2)dy.
\end{align*}
The eigenvalues $\lambda_n$ of $W_R$, sorted as usual in decreasing order, satisfy
\begin{align*}
    &\sum_n \lambda_n = \int_{-R}^R L_t(t)(1+t^2)dt,\\
    &\sum_n \lambda_n^2 = \int_{-R}^R\int_{-R}^R |L_t(y)|^2(1+t^2)(1+y^2)dtdy. 
\end{align*}
Since 
\begin{align*}
    L_t(t) &= \frac{2T}{1+t^2}+ \frac{(1+e^{8\pi T})-2e^{4\pi T}\cos(4\pi Tt)}{\pi (1-e^{8\pi T})(1+t^2)^2},
\end{align*}
we see that 
\begin{align*}
    \sum_n \lambda_n = 4RT +O\left(\frac{1}{1+R}\right).
\end{align*}
Now,
\begin{align*}
    \sum_n \lambda_n^2 &= \int_{-R}^R\int_{-R}^R |L_t(y)|^2(1+t^2)(1+y^2)dtdy\\
    &\geq \int_{-R}^R\int_{-R}^R \left(\frac{(1+ty)\sin(2\pi T(t-y))}{\pi (t-y) (1+t^2)(1+y^2)}\right)^2(1+t^2)(1+y^2)dtdy\\
    &-\int_{-R}^R\int_{-R}^R \left(\frac{c}{(1+y^2) (1+t^2)}\right)^2(1+t^2)(1+y^2)dtdy\\
    &\geq \frac{1}{\pi^2}\int_{-R}^R\int_{-R}^R \left(\frac{\sin(2\pi T(t-y))}{t-y}\right)^2\frac{(1+ty)^2}{(1+t^2)(1+y^2)}dtdy-\frac{c}{(1+R)^2}\\
    &\geq \frac{4}{\pi^2}\int_{4}^{R}\int_{y/2}^{y} \left(\frac{\sin(2\pi T(t-y))}{t-y}\right)^2\left(1-\frac{1}{2ty}\right)dtdy-\frac{c}{(1+R)^2}\\
    &\geq \frac{4}{\pi^2}\int_{4}^{R}\left(\pi^2T-\frac{2}{y}-\frac{\pi^2T-\frac{2}{y}}{y^2} \right)dy-\frac{c}{(1+R)^2}\\
    &\geq 4RT-O(\log R)-O(T).
\end{align*}

Using standard arguments that take into account the fact that $0< \lambda_n <1$ and considering $\sum_n \lambda_n(1-\lambda_n)$, the above facts prove Theorem \ref{PWcon}. We refer to the proof of Theorem \ref{onesideth}, which will carry out an analogous computation.

\section{Eigenvalue problem for one-sided concentration}
Intrigued by the behavior of the eigenvalues of $T_{I,J}$, we study what would happen in the case of one-sided concentration, which leads to Theorem \ref{onesideth}.
Here we consider the problem of maximizing the ratio
$$\lambda = \frac{\int_{I} |f(x)|^2(1+x^2) dx}{\int_{\mathbb{R}} |f(x)|^2(1+x^2) dx+\int_{\mathbb{R}} |\hat{f}(\xi)|^2(1+\xi^2) d\xi},$$
where $I\subset \mathbb{R}$ is measurable.
This leads to the eigenvalue problem
$$\lambda_n f_n(t) = \int_{I} f_n(x)\overline{K_t(x)}(1+x^2)dx ,$$
where $K_t$ is the reproducing kernel of $\mathcal{H}$.
Define an operator $T_I:\mathcal{H} \to \mathcal{H}$ as
$$T_If(t) = \int_{I} f(x)\overline{K_t(x)}(1+x^2)dx.$$
By computations analogous to the two-sided case, we obtain
\begin{align*}
    &\sum_n ||T_Ie_n||^2 = \int_I \int_I K_y(t)^2 (1+y^2)(1+t^2) dydt
\end{align*}
and
\begin{align*}
    \sum_n \innerproduct{T_Ie_n}{e_n} =\sum_n \int_I \overline{e_n(x)} e_n(x)(1+x^2)dx = \int_I K_x(x)(1+x^2)dx.
\end{align*}
Thus if $I=[-R,R]$, then as $R\to \infty$,
\begin{align*}
    &\sum_n \lambda_n \sim \pi R^2, \\
    &\sum_n \lambda_n^2 \sim \frac{\pi}{2} R^2
\end{align*}
by the computation done in the proof of Theorem \ref{sumsq}.
Now, to obtain the third moment needed for the proof of Theorem \ref{onesideth}, we analyze
\begin{align*}
    T_I^3f(t) = \int_{I} \int_{I} \int_{I} f(x_1)\overline{K_{x_2}(x_1)}(1+x_1^2)dx_1\overline{K_{x_3}(x_2)}(1+x_2^2)dx_2\overline{K_t(x_3)}(1+x_3^2)dx_3,
\end{align*}
which gives
\begin{align*}
     &\innerproduct{T_I^3 e_n}{e_n} = \int_{I} \int_{I} \int_{I} e_n(x_1)K_{x_2}(x_1)K_{x_3}(x_2) \overline{e_n(x_3)}(1+x_1^2)(1+x_2^2)(1+x_3^2) dx_1dx_2dx_3 ,
\end{align*}
and thus
\begin{align*}
    \sum_n \innerproduct{T_I^3 e_n}{e_n} = \int_{I} \int_{I} \int_{I} K_{x_1}(x_3)K_{x_2}(x_1)K_{x_3}(x_2)(1+x_1^2)(1+x_2^2)(1+x_3^2) dx_1dx_2dx_3.
\end{align*}
Using that mass will be concentrated around the diagonal and using the approximation formula for the kernel (Lemma \ref{kerapprox}), we find that as $R\to \infty$,
\begin{align*}
    &\int_{I} \int_{I} \int_{I} K_{x_1}(x_3)K_{x_2}(x_1)K_{x_3}(x_2)(1+x_1^2)(1+x_2^2)(1+x_3^2) dx_1dx_2dx_3  \\
    &\sim 2^3 \int_1^R \int_{x_1}^{x_1+1}\int_{x_2}^{x_2+1} K_{x_1}(x_3)K_{x_2}(x_1)K_{x_3}(x_2)x_1^2 x_2^2 x_3^2 dx_3dx_2dx_1 \\
    &\sim (2\pi)^3 \int_1^R \int_{x_1}^{x_1+1}\int_{x_2}^{x_2+1} e^{-2\pi (x_3^2-x_1^2)} x_1x_2x_3 dx_3dx_2dx_1 \\
    &\sim \frac{(2\pi)^3}{4\pi} \int_1^R \int_{x_1}^{x_1+1} e^{-2\pi (x_2^2-x_1^2)} x_1x_2dx_2dx_1 \\
    &\sim \frac{(2\pi)^3}{(4\pi)^2} \int_1^R   x_{1} dx_1 \sim \frac{(2\pi)^3}{(4\pi)^{2}} \frac{R^2}{2} = \frac{\pi}{4} R^2.
\end{align*}
We are now prepared to prove Theorem \ref{onesideth}.
\begin{proof}[Proof of Theorem \ref{onesideth}]
    Consider the sum $\sum_n \lambda_n(\lambda_n-\frac{1}{2})^2.$ By the previous computations of the moments it is
    \begin{align*}
        \sum_n \lambda_n\left(\lambda_n-\frac{1}{2}\right)^2 &= \sum_n \left(\lambda_n^3-\lambda_n^2+\frac{1}{4}\lambda_n\right) \\
        &= \frac{\pi}{4}R^2-\frac{\pi}{2}R^2 +\frac{\pi}{4}R^2+o(R^2) = o(R^2).
    \end{align*}
    Since all eigenvalues are necessarily between $0$ and $1$, this already shows that there are at most $o(R^2)$ eigenvalues larger than $\frac{1}{2}+\epsilon$ and between $\epsilon$ and $\frac{1}{2}-\epsilon$.
    Now, it is clear that there are at most $2\pi R^2+o(R^2)$ eigenvalues between $\frac{1}{2}-\epsilon$ and $\frac{1}{2}+\epsilon$ (as $\sum_n \lambda_n = \pi R^2+o(R^2)$). If there would be only $(2\pi-\delta)R^2$ of them for some $\delta>0$, then to still have $\sum_n \lambda_n \sim \pi R^2$ we would need to have $\sum_{\lambda_n <\epsilon} \lambda_n \sim \delta R^2$ (as we already know that there are $o(R^2)$ eigenvalues greater than $\frac{1}{2}+\epsilon$ and between $\epsilon$ and $\frac{1}{2}-\epsilon$). But then we would have $\sum_{\lambda_n < \epsilon} \lambda_n^2 <\epsilon \delta R^2$, $\sum_{\frac{1}{2}-\epsilon<\lambda_n <\frac{1}{2}+\epsilon} \lambda_n^2 < (\frac{1}{2}+\epsilon)^2 (2\pi-\delta) R^2 < \frac{1}{4}(2\pi-\delta)R^2,$ and so in total $\sum_n \lambda_n^2$ would be smaller than $(\frac{\pi}{2}-\frac{\delta}{4}+\epsilon \delta)R^2 + o(R^2)$. This is a contradiction with $\sum_n \lambda_n^2 = \frac{\pi}{2}R^2+o(R^2)$ for any $\epsilon<\frac{1}{4}$, which concludes the proof.
\end{proof}

\section{Proof of Theorem \ref{4R^2th}}
\begin{proof}[Proof of Theorem \ref{4R^2th}]
    Let $\epsilon>0$, $I=[-R,R], J=[-T,T]$. We will first prove that there are at least $4RT - O(\log R)-O(T)$ eigenvalues of $T_{I,J}$ larger than $1-\epsilon$.
    Define $\mathcal{H}_N$ as $\spn\{f_n\}_{n=1}^{N(R,T)}$, where $f_n$ are eigenfunctions of $W_R$ coming from the weighted Paley--Wiener space $PW_{T}(1+x^2)$, and where $N(R,T)$ denotes the number of eigenvalues of $W_R$ larger than $1-\epsilon$. Clearly, for any $f\in \mathcal{H}_N$ we have
    \begin{align*}
        \frac{\int_{-R}^R |f(x)|^2 (1+x^2)dx}{\int_{\mathbb{R}} |f(x)|^2 (1+x^2)dx}>1-\epsilon.
    \end{align*}
    All functions from $\mathcal{H}_N$ are of course also in $\mathcal{H}$ and
    \begin{equation}\label{epscon}
    \begin{split}
       &\frac{\int_{-R}^{R}|f(x)|^2(1+x^2)dx+\int_{-T}^{T}|\hat{f}(x)|^2(1+x^2)dx}{\int_{\mathbb{R}}|f(x)|^2(1+x^2)dx+\int_{\mathbb{R}}|\hat{f}(x)|^2(1+x^2)dx} \\
        &\geq \frac{(1-\epsilon)\int_{\mathbb{R}}|f(x)|^2(1+x^2)dx+\int_{\mathbb{R}}|\hat{f}(x)|^2(1+x^2)dx}{\int_{\mathbb{R}}|f(x)|^2(1+x^2)dx+\int_{\mathbb{R}}|\hat{f}(x)|^2(1+x^2)dx}\\
        &\geq 1-\epsilon.
    \end{split}
    \end{equation}
    We therefore have an $N(R,T)$ dimensional subspace of $\mathcal{H}$ of functions satisfying (\ref{epscon}), and so the conclusion follows by Theorem \ref{PWcon} and the min-max theorem. 

    To prove that there are at most $2\pi(R^2+T^2) + o(R^2)+o(T^2)$ eigenvalues of $T_{I,J}$ larger than $\epsilon<\frac{1}{4}$, we notice first that a theorem analogous to Theorem \ref{onesideth} holds for the operator $\widetilde{T_J}:\mathcal{H} \to \mathcal{H}$ defined as
    $$\widetilde{T_J}f(t) = \int_{J} \hat{f}(x)\overline{\hat{K_t}(x)}(1+x^2)dx,$$
    corresponding to the problem of maximizing the ratio 
    \begin{align*}
    \frac{\int_{J} |\hat{f}(x)|^2(1+x^2) dx}{\int_{\mathbb{R}} |f(x)|^2(1+x^2) dx+\int_{\mathbb{R}} |\hat{f}(\xi)|^2(1+\xi^2) d\xi}.
    \end{align*}
    With this notation, we have $T_{I,J}f(t) = T_If(t)+\widetilde{T_J}f(t)$. 
    Let $\{g_n(x)\}_{n\geq0}$ and $\{\widetilde{g_n}(x)\}_{n\geq0}$ be two orthonormal bases of $\mathcal{H}$, where $T_{I}g_n(x) = \lambda_ng_n(x)$ and $\widetilde{T_{J}}\widetilde{g_n}(x) = \widetilde{\lambda_n}\widetilde{g_n}(x)$, and where $\lambda_0\geq\lambda_1\geq\lambda_2\geq\dots$, $\widetilde{\lambda_0}\geq\widetilde{\lambda_1}\geq\widetilde{\lambda_2}\geq\dots$.
    Note that we have a double orthogonality of such bases with respect to the studied concentration, i.e. for $n\neq m$ 
    \begin{align*}
        0 &= \lambda_n\innerproduct{g_n}{g_m} = \innerproduct{T_Ig_n}{g_m} = \int_{\mathbb{R}}T_Ig_n(x)\overline{g_m(x)}(1+x^2)dx+\int_{\mathbb{R}}\widehat{T_Ig_n}(x)\overline{\widehat{g_m}(x)}(1+x^2)dx \\
        &=\int_I g_n(x)\overline{g_m(x)}(1+x^2)dx.
    \end{align*}
    This implies that if $f(x)=\sum_{n=0}^{\infty}c_ng_n(x)$ is such that $\frac{\int_I |f(x)|^2(1+x^2)dx}{||f||^2}>\epsilon$, then $c_N\neq 0$ for some $N$ with $\lambda_N>\epsilon$. Let us thus construct 
    \begin{align*}
        S := \spn \{g_n, \widetilde{g_m}|\lambda_n,\widetilde{\lambda_m}>\frac{\epsilon}{2}\}, 
    \end{align*}
    where $\lambda_n,\widetilde{\lambda_m}$ are the corresponding eigenvalues. Using the min-max theorem for this subspace will therefore give
    \begin{align*}
        \mu_n < \max_{f\in S^{\perp}} \frac{\innerproduct{T_{I,J}f}{f}}{||f||^2}<\epsilon,
    \end{align*}
    where $\mu_n$ denotes the $n$th eigenvalue of $T_{I,J}$.
    Theorem \ref{onesideth} implies that the dimension of this subspace is at most $2\pi R^2 + 2\pi T^2 + o(R^2)+o(T^2)$, which concludes the proof. 
\end{proof}

\thanks{\textbf{Acknowledgments} I would like to extend my gratitude to my Ph.D. supervisor Kristian Seip for his guidance and for introducing me to the topics presented in this paper.
Special thanks to Joaquim Bruna, Hans G. Feichtinger, Aleksei Kulikov, and Joaquim Ortega Cerdà for fruitful discussions and for their comments.}
\bibliographystyle{abbrv}
\bibliography{papers}

\end{document}